\newtheorem {Lemma} {Lemma}
\newtheorem {Theorem}  {Theorem}
\newtheorem {Problem} {Problem}
\begin{document}
\baselineskip = 15pt
\bibliographystyle{plain}

\title[Undecidability of Translational Tiling with Orthogonally Convex Polyominoes]{Undecidability of Translational Tiling of the Plane with Orthogonally Convex Polyominoes}
\date{}
\author{Chao Yang}\address{School of Mathematics and Statistics, Guangdong University of Foreign Studies, Guangzhou, 510006, China} 
\email{sokoban2007@163.com, yangchao@gdufs.edu.cn}

\author{Zhujun Zhang}
\address{Big Data Center of Fengxian District, Shanghai, 201499, China}
\email{zhangzhujun1988@163.com}

\begin{abstract}
The first undecidability result on the tiling is the undecidability of translational tiling of the plane with Wang tiles, where there is an additional color matching requirement. Later, researchers obtained several undecidability results on translational tiling problems where the tilings are subject to the geometric shapes of the tiles only. However, all these results are proved by constructing tiles with extremely concave shapes. It is natural to ask: can we obtain undecidability results of translational tiling with convex tiles? Towards answering this question, we prove the undecidability of translational tiling of the plane with a set of $7$ orthogonally convex polyominoes.
\end{abstract}

\maketitle

\noindent{\textbf{Keywords}}:
tiling, translation, undecidability, orthogonally convex, convex\\
MSC2020: 52C20, 68Q17

\section{Introduction} \label{sec_intro}

The study of undecidability of translational tiling problems originated from the domino problem introduced by Hao Wang in the 1960s. A \textit{Wang tile} is a unit square with each edge assigned a color. Given a finite set of Wang tiles (see Figure \ref{fig_wang_set} for an example), Wang considered the problem of tiling the entire plane with translated copies of the set, under the conditions that the tiles must be edge-to-edge and the color of common edges of any two adjacent Wang tiles must be the same \cite{wang61}. This is known as \textit{Wang's domino problem}. Berger showed that Wang's domino problem is undecidable in general (i.e. the size of the set of Wang tiles can be arbitrarily large\footnote{If the size of the set of Wang tiles is fixed, then Wang's domino problem is decidable, as there are only a finite number of instances.}) in the 1960s.

\begin{Theorem}[\cite{b66}]\label{thm_berger}
    Wang's domino problem is undecidable.
\end{Theorem}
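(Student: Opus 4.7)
The plan is to reduce a known undecidable problem about Turing machines to the domino problem. Given an arbitrary Turing machine $M$, I would construct an explicit finite set of Wang tiles $T_M$ with the property that $T_M$ admits a tiling of the plane if and only if $M$ does not halt on the empty input. Since the halting problem is undecidable, no algorithm can then decide, given a finite Wang tile set $T$, whether $T$ tiles the plane; this is precisely Wang's domino problem.

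The central difficulty is that the computation of $M$ is one-dimensional in time, whereas a Wang tiling is a two-dimensional object that can easily be periodic, so a naive encoding fails: a periodic tiling would allow infinite computations even if $M$ halts. To break periodicity, the first step is to construct (or adapt) an \emph{aperiodic} set of Wang tiles whose color-matching rules force any valid tiling to exhibit a hierarchical, self-similar structure of nested squares of sizes $2^n \times 2^n$ for every $n \ge 1$. Robinson-style constructions using "crosses" and "brackets" propagated by local color matches are the natural tool here; the essential output is that every global tiling contains, around every tile, a square of arbitrarily large size with distinguished boundary and interior sublattices.

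The second step is to superimpose a "computation layer" of colors on top of the aperiodic skeleton. Inside each hierarchical square of side $2^n$, the tiles are forced to encode a space-time diagram of $M$: horizontal rows represent tape configurations, and vertical adjacency between rows is constrained to reflect exactly one transition step of $M$. The bottom row of each square is pinned to the initial blank configuration by boundary tiles, and crucially, no tile encoding a halting state is included in $T_M$. Thus a square of side $2^n$ can be completed if and only if $M$ survives for at least $2^n$ steps without halting. If $M$ never halts, every square can be completed and, because the hierarchy is compatible across scales, the whole plane can be tiled. Conversely, if $M$ halts in $k$ steps, then any tiling would contain squares of side $\ge k$ in which a halting state must appear, leaving an unfillable cell; hence no tiling exists.

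The main obstacle is the first step: designing local color-matching rules that rigidly enforce the nested hierarchy while still leaving enough freedom for the computation layer to vary. One must carefully check that the computation colors are orthogonal to the hierarchy colors (so they neither create new forced patterns nor destroy aperiodicity), that the alignment of tape rows with the hierarchical brackets is consistent at every scale, and that transitions at the boundary of a size-$2^n$ square correctly hand off to the enclosing size-$2^{n+1}$ square. Once this bookkeeping is done, the equivalence between tileability of $T_M$ and non-halting of $M$ follows, and undecidability of the domino problem is immediate from undecidability of the halting problem.
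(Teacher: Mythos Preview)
The paper does not prove this theorem at all: Theorem~\ref{thm_berger} is stated with a citation to Berger's 1966 memoir and is used as a black box in the reduction that establishes the paper's own main result (Theorem~\ref{thm_main}). So there is no ``paper's own proof'' to compare against; your proposal is addressing a statement the authors deliberately quote rather than reprove.

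That said, your sketch is a faithful outline of the Berger--Robinson approach and is correct in spirit. A few points worth flagging if you intend this as a standalone proof. First, Berger's original argument did not use Robinson's aperiodic tile set (which came later, in 1971); he built a much larger aperiodic set directly. Your hierarchical-squares description matches Robinson's simplification more than Berger's original, so the attribution should be adjusted. Second, the sentence ``transitions at the boundary of a size-$2^n$ square correctly hand off to the enclosing size-$2^{n+1}$ square'' glosses over the genuinely delicate part: in the standard construction the computations in nested squares are \emph{independent} restarts from the blank tape, not a single computation handed off across scales, and one must argue via compactness (or K\"onig's lemma) that if every finite square can be tiled then the whole plane can. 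Third, the claim that the computation layer is ``orthogonal'' to the hierarchy layer hides real work---in Robinson's construction the free rows/columns available for computation inside a $2^n$-square are sparse (roughly $2^{n-1}$ of them), and one must check that this still suffices to simulate enough steps. None of these are fatal, but as written the proposal is a high-level roadmap rather than a proof.
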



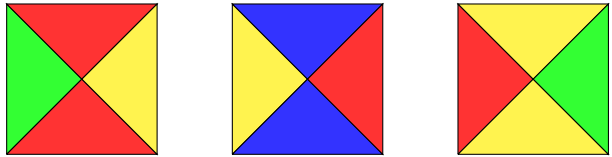
\begin{figure}[H]
\begin{center}
\begin{tikzpicture}

\draw [fill=green!80] (0,0)--(1,1)--(0,2)--(0,0);
\draw [fill=red!80] (0,0)--(2,0)--(0,2)--(2,2)--(0,0);
\draw [fill=yellow!80] (1,1)--(2,2)--(2,0)--(1,1);

\draw [fill=yellow!80] (3+0,0)--(3+1,1)--(3+0,2)--(3+0,0);
\draw [fill=blue!80] (3+0,0)--(3+2,0)--(3+0,2)--(3+2,2)--(3+0,0);
\draw [fill=red!80] (3+1,1)--(3+2,2)--(3+2,0)--(3+1,1);

\draw [fill=red!80] (6+0,0)--(6+1,1)--(6+0,2)--(6+0,0);
\draw [fill=yellow!80] (6+0,0)--(6+2,0)--(6+0,2)--(6+2,2)--(6+0,0);
\draw [fill=green!80] (6+1,1)--(6+2,2)--(6+2,0)--(6+1,1);

\end{tikzpicture}
\end{center}
\caption{A set of $3$ Wang tiles.}\label{fig_wang_set}
\end{figure}

Note that in Wang's domino problem, the matching rule is mainly non-geometrical (i.e., by colors). In this paper, we study the undecidability of the following translational tiling problems (Problem \ref{pro_main}) of the $n$-dimensional space $\mathbb{Z}^n$, where the matching rule is purely geometric. An interesting trend of the recent study on Problem \ref{pro_main} focuses on minimizing the number of tiles, but maintaining the undecidability. 

\begin{Problem}[Translational tiling of $\mathbb{Z}^n$ with a set of $k$ tiles] \label{pro_main}
A tile is a finite subset of $\mathbb{Z}^n$. Let $k$ and $n$ be fixed positive integers. Given a set $S$ of $k$ tiles in $\mathbb{Z}^n$, is there an algorithm to decide whether $\mathbb{Z}^n$ can be tiled by translated copies of tiles in $S$?
\end{Problem}

It is straightforward that if the above translational tiling problem is undecidable for a fixed pair of parameters $n_0$ and $k_0$, then it is also undecidable for $(n,k)$ for all $n\geq n_0$ and $k\geq k_0$. The answer to Problem~\ref{pro_main} is known for some pairs of parameters $n$ and $k$. The problem is decidable for $n=1$ \cite{s93}, and for $(n,k)=(2,1)$ \cite{bn91,b20, gt21, w15}. On the other hand, the problem is undecidable for $n=2$ and $k\geq 7$ \cite{yang23, yang23b, yz24,yz24e}, for $n=3$ and $k\geq 4$, and for $n=4$ and $k\geq 3$ \cite{yz24b,yz24c,yz24d}. Two recent results of Greenfeld and Tao \cite{gt24a, gt24b} suggest that the translational tiling problem may be undecidable even for one tile ($k=1$) and some sufficiently large fixed dimension $n$. In fact, they disprove the periodic tiling conjecture \cite{gs16, lw96, s74} by showing the existence of an aperiodic monotile in some extremely large fixed dimension \cite{gt24a}. They also show that if the dimension $n$ is part of the input, the translational tiling for subsets of $\mathbb{Z}^n$ with one tile is undecidable \cite{gt24b}. 

However, all the results mentioned in the previous paragraph depend on constructing tiles with extremely concave shapes, where the concave shapes are crucial in the proof of undecidability. Can the translational tiling problem be undecidable with convex tiles for some pair of parameters $(n,k)$? 

A polytope is said to be \textit{convex} if the intersection with an arbitrary line is either a segment (including the degenerate segment, i.e., a point) or empty. Because a convex polyomino must be a rectangle, therefore any convex polyomino can tile the plane, and the problem is trivially decidable. So we consider a weaker notion of convexity in this paper. There are several different notations of restricted convexity in the literature; see \cite{fw04} for a comprehensive survey. We only consider the concept of orthogonally convex in this paper. A polytope is said to be \textit{orthogonally convex} if, for every line $L$ that is parallel to one of the standard basis vectors, the intersection of the polytope with $L$ is either a segment or empty. Figure \ref{fig_pento} illustrates two pentominoes: the left one is orthogonally convex, and the right one is not orthogonally convex. Orthogonal convexity is also known as \textit{rectilinear convexity} in dimension $2$. Orthogonal convexity has received considerable studies in the literature. For example, a rectilinear convex variant of the Erd\H{o}s-Szekeres problem has been studied in \cite{g19}.


\begin{figure}[H]
\begin{center}
\begin{tikzpicture}

\draw (2,0)--(2,3)--(1,3)--(1,0)--(2,0);
\draw (0,1)--(0,2)--(3,2)--(3,1)--(0,1);

\draw (8,0)--(8,3)--(9,3)--(9,0)--(8,0);
\draw (8,0)--(7,0)--(7,1)--(9,1); 
\draw (8,3)--(7,3)--(7,2)--(9,2); 

\end{tikzpicture}
\end{center}
\caption{Orthogonally convex and not orthogonally convex pentominoes.}\label{fig_pento}
\end{figure}
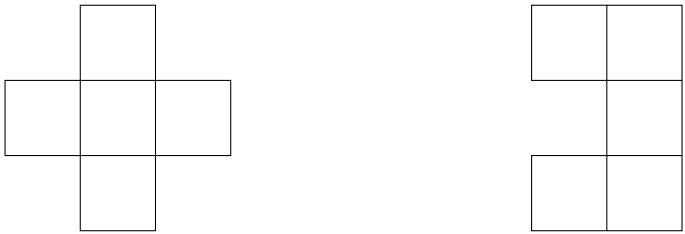

The main contribution of this paper is the following result.

\begin{Theorem}[Undecidability with Orthogonally Convex Tiles]\label{thm_main}
    Translational tiling of the plane with a set of $7$ orthogonally convex polyominoes is undecidable.
\end{Theorem}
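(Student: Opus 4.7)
The plan is to reduce from Wang's domino problem. By Berger's theorem (Theorem~\ref{thm_berger}), this problem is undecidable when the set of Wang tiles is part of the input. Given an arbitrary Wang tile set $W$ with some color palette, I will describe a computable construction of a set $S_W$ of exactly $7$ orthogonally convex polyominoes such that $\mathbb{Z}^2$ admits a tiling by translated copies of $S_W$ if and only if the plane admits a valid Wang tiling by $W$. Since the reduction is computable, undecidability will transfer, and the theorem will follow.

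The heart of the construction is a two-layer geometric encoding. I plan to split the $7$ tiles into a small group of large \emph{frame} polyominoes, whose only plane-filling placements are forced to produce a regular square lattice of ``cells,'' and a small group of \emph{encoding} polyominoes whose admissible placements in the resulting gaps encode both the Wang tile assigned to each cell and the color-matching constraint across adjacent cells. Orthogonal convexity still permits shapes with staircase boundaries (parallelogram- or right-triangle-like profiles, plus- and T-shaped polyominoes, and so on), so information can be carried by the lengths and offsets of staircase runs even though no notch-and-bump protrusions are allowed. In particular, a pair of horizontally or vertically adjacent cells will be bridged by a patch of encoding polyominoes whose combined extent depends on a numerical color index, so that only matching color pairs admit a fillable patch.

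The verification will have two directions. First, from any valid Wang tiling, I construct a tiling by $S_W$ by laying down the frame polyominoes periodically to produce the lattice and then, cell by cell, inserting the encoding polyominoes dictated by the chosen Wang tile. Conversely, I must show that every tiling of the plane by $S_W$ decomposes into a frame layer and an encoding layer in the intended way: the staircase interlocking of the frame tiles will make the lattice placement essentially forced, and the shapes of the encoding polyominoes will then force the Wang matching constraint across every cell boundary. Because $7$ is fixed while $W$ may be arbitrary in size and color count, the color information must be packed parametrically into the dimensions of a fixed family of tiles rather than spread across many tile species.

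The main obstacle is precisely this color-packing step under the convexity constraint. In the authors' earlier, non-convex $7$-tile construction, one can carve teeth and sockets directly onto the tiles to enforce Wang matching, but no such protrusions are available here. Consequently, the matching must be encoded through staircase-run lengths or diagonal-edge offsets, and one must simultaneously rule out every unintended way tiles could meet. The most delicate part of the argument will therefore be a geometric case analysis showing that, in any tiling by $S_W$, the encoding polyominoes can only interface in configurations corresponding to legal Wang tile adjacencies. Once this uniqueness-of-interfacing lemma is established, both directions of the reduction will follow from a direct case check, and the undecidability stated in Theorem~\ref{thm_main} will be a consequence of Theorem~\ref{thm_berger}.
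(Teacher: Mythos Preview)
Your proposal is a research plan rather than a proof: you announce that you will construct $S_W$ and verify a two-way correspondence, but you never actually exhibit the seven tiles, and you explicitly defer the ``most delicate part'' (the forcing case analysis) without doing it. As written there is nothing to check.

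More substantively, the plan rests on a misconception that blocks it from ever becoming the paper's proof. You assert that under orthogonal convexity ``no notch-and-bump protrusions are allowed,'' and therefore propose to encode all color information purely through staircase run-lengths and offsets. This is not what the paper does, and it is not true in the sense you need. The paper's central geometric idea is to build everything from diamond-shaped \emph{level-$2$ squares} (translates of $\{(x,y):|x|+|y|<a\}$), whose four sides are $45^\circ$ staircases. On such a staircase side one \emph{can} carve a genuine dent or attach a genuine bump while the polyomino remains orthogonally convex (Figure~\ref{fig_db}). These dents and bumps then behave exactly like the teeth and sockets of the non-convex constructions: a bump must meet a dent on the neighboring tile, and a dent--dent gap is plugged by a dedicated \emph{tiny filler}. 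A further layer of \emph{level-$3$ squares} (arrays of level-$2$ squares) carries binary strings of dent/bump data along each side, and this is what allows a single \emph{encoder} polyomino to package every Wang tile of $W$ simultaneously, with \emph{locator} pieces forcing a lattice and \emph{linker} pieces enforcing color matching (Lemmas~\ref{lem_tiny_2}--\ref{lem_align_3} and the pattern of Figure~\ref{fig_pattern}).

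Without this rotated-diamond-with-bumps device, your run-length scheme is not obviously realizable with only seven tiles: you would need a rigid frame with no interlocking features, a single parametric tile encoding all of $W$, and a matching mechanism across cell boundaries based solely on offsets, together with an exhaustive exclusion of unintended interfaces. None of that is supplied here. The gap is therefore not a missing detail but the absence of the main construction; the paper's level-$2$/level-$3$ building blocks with orthogonally-convex bumps are precisely what fill it.
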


We will prove Theorem \ref{thm_main} by reduction from Wang's domino problem (Theorem \ref{thm_berger}). The overall reduction follows Ollinger's framework \cite{o09} in spirit. Ollinger developed a framework to prove the undecidability of the translational tiling problem with $n=2$ and $k=11$, which is the first result of the undecidability concerning a fixed number of tiles. Many new techniques are incorporated into Ollinger's framework to improve Ollinger's result from parameters $(n,k)=(2,11)$ to $(n,k)=(2,7)$ by Yang and Zhang \cite{yang23,yang23b,yz24,yz24e}, and to $(n,k)=(2,5)$ by Kim \cite{k25}. To deal with orthogonally convex polyominoes, we introduce more novel techniques in this paper. One of the most important new techniques is the even higher level of abstraction of the basic building blocks for constructing the orthogonally convex polyominoes. Intuitively speaking, each high-level building block encodes a binary string of information, compared to previous works where a building block encodes only roughly one bit of information. The higher level of encoding method enables more versatile simulating ability yet retains a relatively simple structure in constructing the set of polyominoes.

The rest of the paper is organized as follows. Section~\ref{sec_bb} defines the basic building blocks, Section~\ref{sec_tileset} describes the complete set of $7$ orthogonally convex polyominoes using the building blocks, Section~\ref{sec_pattern} proves the main result (Theorem \ref{thm_main}), and Section~\ref{sec_conc} concludes with a few remarks.

\section{Building Blocks of Polyominoes}\label{sec_bb}

\subsection{Three Levels of Building Blocks} We distinguish three levels of building blocks: the normal $1\times 1$ unit squares (level-$1$ squares), level-$2$ squares with order $a$ and level-$3$ squares with order $(a,b)$, where the parameters $a$ and $b$ are positive integers.

\begin{figure}[H]
\begin{center}
\begin{tikzpicture}[scale=0.3]

\foreach \x in {-8}
\foreach \y in {0}
{
\draw [fill=gray!20] (\x+0,\y+0)--(\x,\y+1)--(\x+1,\y+1)--(\x+1,\y)--(\x+0,\y+0);
}

\foreach \x in {0,12}
\foreach \y in {0}
{
\draw [fill=gray!20] (\x+0,\y+0)--(\x,\y+1)--(\x+1,\y+1)--(\x+1,\y)--(\x+0,\y+0);
}
\foreach \x in {1,11}
\foreach \y in {-1,...,1}
{
\draw [fill=gray!20] (\x+0,\y+0)--(\x,\y+1)--(\x+1,\y+1)--(\x+1,\y)--(\x+0,\y+0);
}
\foreach \x in {2,10}
\foreach \y in {-2,...,2}
{
\draw [fill=gray!20] (\x+0,\y+0)--(\x,\y+1)--(\x+1,\y+1)--(\x+1,\y)--(\x+0,\y+0);
}
\foreach \x in {3,9}
\foreach \y in {-3,...,3}
{
\draw [fill=gray!20] (\x+0,\y+0)--(\x,\y+1)--(\x+1,\y+1)--(\x+1,\y)--(\x+0,\y+0);
}
\foreach \x in {4,8}
\foreach \y in {-4,...,4}
{
\draw [fill=gray!20] (\x+0,\y+0)--(\x,\y+1)--(\x+1,\y+1)--(\x+1,\y)--(\x+0,\y+0);
}
\foreach \x in {5,7}
\foreach \y in {-5,...,5}
{
\draw [fill=gray!20] (\x+0,\y+0)--(\x,\y+1)--(\x+1,\y+1)--(\x+1,\y)--(\x+0,\y+0);
}
\foreach \x in {6}
\foreach \y in {-6,...,6}
{
\draw [fill=gray!20] (\x+0,\y+0)--(\x,\y+1)--(\x+1,\y+1)--(\x+1,\y)--(\x+0,\y+0);
}


\filldraw[red] (6.5,0.5) circle (.2);

\end{tikzpicture}
\end{center}
\caption{A level-$1$ square (left), and a level-$2$ square of order $7$ (right).}\label{fig_s12}
\end{figure}
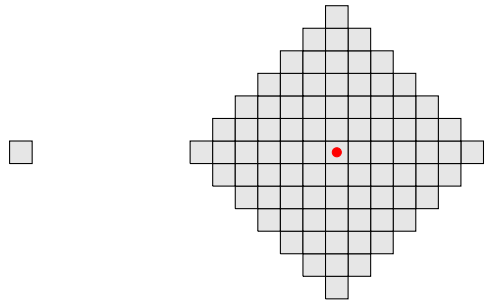

As illustrated on the left of Figure \ref{fig_s12}, the level-$1$ square is just a $1\times 1$ unit square. A level-$2$ square of order $7$ is illustrated on the right of Figure \ref{fig_s12}. In fact, a level-$2$ square is not a square, but it simulates a square because it can tile the plane with a lattice structure like a real square. In general, for any positive integer $a$, a level-$2$ square of order $a$ is defined by any translation of the following set
$$\{(x,y) \big| |x|+|y|<a\},$$
where the square at $(0,0)$ is called the \textit{center} of the level-$2$ square (see in red dot in Figure \ref{fig_s12}). By definition, a level-$2$ square of order 1 is the same as a level-$1$ square.

A level-$3$ square of order $(7,3)$ is illustrated in Figure \ref{fig_s3}. Just like a level-$2$ square, a level-$3$ square is not a square but simulates a square. It consists of $3\times 3=9$ level-$2$ squares of order $7$. The $9$ level-$2$ squares are placed together in a lattice pattern without gaps or overlaps, where their centers are defined by the set
$$\{x\mathbf{i}+y\mathbf{j}\big | \mathbf{i}=(7,6), \mathbf{j}=(-6,7), 0\leq x,y\leq 2\}.$$
In general, a level-$3$ square of order $(a,b)$ is the union of $b^2$ level-$2$ squares of order $a$, where the centers of the level-$2$ squares are defined by the set
$$\{x\mathbf{i}+y\mathbf{j}\big | \mathbf{i}=(a,a-1), \mathbf{j}=(-a+1,a), 0\leq x,y\leq b\}.$$
The unique bottom most unit square of a level-3 square is referred to as the \textit{bottom corner} (see the red dot in Figure \ref{fig_s3}).


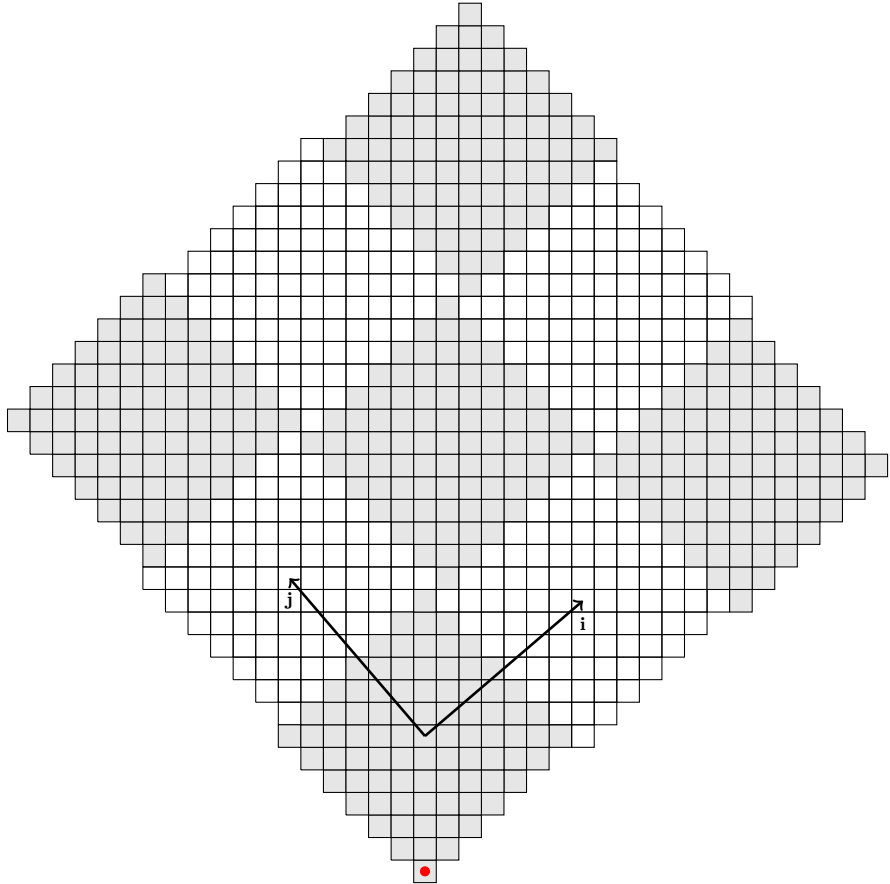
\begin{figure}[H]
\begin{center}
\begin{tikzpicture}[scale=0.3]

\foreach \x in {0,12}
\foreach \y in {0}
{
\draw [fill=gray!20] (\x+0,\y+0)--(\x,\y+1)--(\x+1,\y+1)--(\x+1,\y)--(\x+0,\y+0);
}
\foreach \x in {1,11}
\foreach \y in {-1,...,1}
{
\draw [fill=gray!20] (\x+0,\y+0)--(\x,\y+1)--(\x+1,\y+1)--(\x+1,\y)--(\x+0,\y+0);
}
\foreach \x in {2,10}
\foreach \y in {-2,...,2}
{
\draw [fill=gray!20] (\x+0,\y+0)--(\x,\y+1)--(\x+1,\y+1)--(\x+1,\y)--(\x+0,\y+0);
}
\foreach \x in {3,9}
\foreach \y in {-3,...,3}
{
\draw [fill=gray!20] (\x+0,\y+0)--(\x,\y+1)--(\x+1,\y+1)--(\x+1,\y)--(\x+0,\y+0);
}
\foreach \x in {4,8}
\foreach \y in {-4,...,4}
{
\draw [fill=gray!20] (\x+0,\y+0)--(\x,\y+1)--(\x+1,\y+1)--(\x+1,\y)--(\x+0,\y+0);
}
\foreach \x in {5,7}
\foreach \y in {-5,...,5}
{
\draw [fill=gray!20] (\x+0,\y+0)--(\x,\y+1)--(\x+1,\y+1)--(\x+1,\y)--(\x+0,\y+0);
}
\foreach \x in {6}
\foreach \y in {-6,...,6}
{
\draw [fill=gray!20] (\x+0,\y+0)--(\x,\y+1)--(\x+1,\y+1)--(\x+1,\y)--(\x+0,\y+0);
}


\foreach \x in {13,25}
\foreach \y in {-1}
{
\draw [fill=gray!20] (\x+0,\y+0)--(\x,\y+1)--(\x+1,\y+1)--(\x+1,\y)--(\x+0,\y+0);
}
\foreach \x in {14,24}
\foreach \y in {-2,...,0}
{
\draw [fill=gray!20] (\x+0,\y+0)--(\x,\y+1)--(\x+1,\y+1)--(\x+1,\y)--(\x+0,\y+0);
}
\foreach \x in {15,23}
\foreach \y in {-3,...,1}
{
\draw [fill=gray!20] (\x+0,\y+0)--(\x,\y+1)--(\x+1,\y+1)--(\x+1,\y)--(\x+0,\y+0);
}
\foreach \x in {16,22}
\foreach \y in {-4,...,2}
{
\draw [fill=gray!20] (\x+0,\y+0)--(\x,\y+1)--(\x+1,\y+1)--(\x+1,\y)--(\x+0,\y+0);
}
\foreach \x in {17,21}
\foreach \y in {-5,...,3}
{
\draw [fill=gray!20] (\x+0,\y+0)--(\x,\y+1)--(\x+1,\y+1)--(\x+1,\y)--(\x+0,\y+0);
}
\foreach \x in {18,20}
\foreach \y in {-6,...,4}
{
\draw [fill=gray!20] (\x+0,\y+0)--(\x,\y+1)--(\x+1,\y+1)--(\x+1,\y)--(\x+0,\y+0);
}
\foreach \x in {19}
\foreach \y in {-7,...,5}
{
\draw [fill=gray!20] (\x+0,\y+0)--(\x,\y+1)--(\x+1,\y+1)--(\x+1,\y)--(\x+0,\y+0);
}


\foreach \x in {7,19}
\foreach \y in {6}
{
\draw (\x+0,\y+0)--(\x,\y+1)--(\x+1,\y+1)--(\x+1,\y)--(\x+0,\y+0);
}
\foreach \x in {8,18}
\foreach \y in {5,...,7}
{
\draw (\x+0,\y+0)--(\x,\y+1)--(\x+1,\y+1)--(\x+1,\y)--(\x+0,\y+0);
}
\foreach \x in {9,17}
\foreach \y in {4,...,8}
{
\draw (\x+0,\y+0)--(\x,\y+1)--(\x+1,\y+1)--(\x+1,\y)--(\x+0,\y+0);
}
\foreach \x in {10,16}
\foreach \y in {3,...,9}
{
\draw (\x+0,\y+0)--(\x,\y+1)--(\x+1,\y+1)--(\x+1,\y)--(\x+0,\y+0);
}
\foreach \x in {11,15}
\foreach \y in {2,...,10}
{
\draw (\x+0,\y+0)--(\x,\y+1)--(\x+1,\y+1)--(\x+1,\y)--(\x+0,\y+0);
}
\foreach \x in {12,14}
\foreach \y in {1,...,11}
{
\draw (\x+0,\y+0)--(\x,\y+1)--(\x+1,\y+1)--(\x+1,\y)--(\x+0,\y+0);
}
\foreach \x in {13}
\foreach \y in {0,...,12}
{
\draw (\x+0,\y+0)--(\x,\y+1)--(\x+1,\y+1)--(\x+1,\y)--(\x+0,\y+0);
}


\foreach \x in {6,18}
\foreach \y in {-7}
{
\draw (\x+0,\y+0)--(\x,\y+1)--(\x+1,\y+1)--(\x+1,\y)--(\x+0,\y+0);
}
\foreach \x in {7,17}
\foreach \y in {-8,...,-6}
{
\draw (\x+0,\y+0)--(\x,\y+1)--(\x+1,\y+1)--(\x+1,\y)--(\x+0,\y+0);
}
\foreach \x in {8,16}
\foreach \y in {-9,...,-5}
{
\draw (\x+0,\y+0)--(\x,\y+1)--(\x+1,\y+1)--(\x+1,\y)--(\x+0,\y+0);
}
\foreach \x in {9,15}
\foreach \y in {-10,...,-4}
{
\draw (\x+0,\y+0)--(\x,\y+1)--(\x+1,\y+1)--(\x+1,\y)--(\x+0,\y+0);
}
\foreach \x in {10,14}
\foreach \y in {-11,...,-3}
{
\draw (\x+0,\y+0)--(\x,\y+1)--(\x+1,\y+1)--(\x+1,\y)--(\x+0,\y+0);
}
\foreach \x in {11,13}
\foreach \y in {-12,...,-2}
{
\draw (\x+0,\y+0)--(\x,\y+1)--(\x+1,\y+1)--(\x+1,\y)--(\x+0,\y+0);
}
\foreach \x in {12}
\foreach \y in {-13,...,-1}
{
\draw (\x+0,\y+0)--(\x,\y+1)--(\x+1,\y+1)--(\x+1,\y)--(\x+0,\y+0);
}


\foreach \x in {14,26}
\foreach \y in {12}
{
\draw [fill=gray!20] (\x+0,\y+0)--(\x,\y+1)--(\x+1,\y+1)--(\x+1,\y)--(\x+0,\y+0);
}
\foreach \x in {15,25}
\foreach \y in {11,...,13}
{
\draw [fill=gray!20] (\x+0,\y+0)--(\x,\y+1)--(\x+1,\y+1)--(\x+1,\y)--(\x+0,\y+0);
}
\foreach \x in {16,24}
\foreach \y in {10,...,14}
{
\draw [fill=gray!20] (\x+0,\y+0)--(\x,\y+1)--(\x+1,\y+1)--(\x+1,\y)--(\x+0,\y+0);
}
\foreach \x in {17,23}
\foreach \y in {9,...,15}
{
\draw [fill=gray!20] (\x+0,\y+0)--(\x,\y+1)--(\x+1,\y+1)--(\x+1,\y)--(\x+0,\y+0);
}
\foreach \x in {18,22}
\foreach \y in {8,...,16}
{
\draw [fill=gray!20] (\x+0,\y+0)--(\x,\y+1)--(\x+1,\y+1)--(\x+1,\y)--(\x+0,\y+0);
}
\foreach \x in {19,21}
\foreach \y in {7,...,17}
{
\draw [fill=gray!20] (\x+0,\y+0)--(\x,\y+1)--(\x+1,\y+1)--(\x+1,\y)--(\x+0,\y+0);
}
\foreach \x in {20}
\foreach \y in {6,...,18}
{
\draw [fill=gray!20] (\x+0,\y+0)--(\x,\y+1)--(\x+1,\y+1)--(\x+1,\y)--(\x+0,\y+0);
}


\foreach \x in {12,24}
\foreach \y in {-14}
{
\draw [fill=gray!20] (\x+0,\y+0)--(\x,\y+1)--(\x+1,\y+1)--(\x+1,\y)--(\x+0,\y+0);
}
\foreach \x in {13,23}
\foreach \y in {-15,...,-13}
{
\draw [fill=gray!20] (\x+0,\y+0)--(\x,\y+1)--(\x+1,\y+1)--(\x+1,\y)--(\x+0,\y+0);
}
\foreach \x in {14,22}
\foreach \y in {-16,...,-12}
{
\draw [fill=gray!20] (\x+0,\y+0)--(\x,\y+1)--(\x+1,\y+1)--(\x+1,\y)--(\x+0,\y+0);
}
\foreach \x in {15,21}
\foreach \y in {-17,...,-11}
{
\draw [fill=gray!20] (\x+0,\y+0)--(\x,\y+1)--(\x+1,\y+1)--(\x+1,\y)--(\x+0,\y+0);
}
\foreach \x in {16,20}
\foreach \y in {-18,...,-10}
{
\draw [fill=gray!20] (\x+0,\y+0)--(\x,\y+1)--(\x+1,\y+1)--(\x+1,\y)--(\x+0,\y+0);
}
\foreach \x in {17,19}
\foreach \y in {-19,...,-9}
{
\draw [fill=gray!20] (\x+0,\y+0)--(\x,\y+1)--(\x+1,\y+1)--(\x+1,\y)--(\x+0,\y+0);
}
\foreach \x in {18}
\foreach \y in {-20,...,-8}
{
\draw [fill=gray!20] (\x+0,\y+0)--(\x,\y+1)--(\x+1,\y+1)--(\x+1,\y)--(\x+0,\y+0);
}


\foreach \x in {20,32}
\foreach \y in {5}
{
\draw (\x+0,\y+0)--(\x,\y+1)--(\x+1,\y+1)--(\x+1,\y)--(\x+0,\y+0);
}
\foreach \x in {21,31}
\foreach \y in {4,...,6}
{
\draw (\x+0,\y+0)--(\x,\y+1)--(\x+1,\y+1)--(\x+1,\y)--(\x+0,\y+0);
}
\foreach \x in {22,30}
\foreach \y in {3,...,7}
{
\draw (\x+0,\y+0)--(\x,\y+1)--(\x+1,\y+1)--(\x+1,\y)--(\x+0,\y+0);
}
\foreach \x in {23,29}
\foreach \y in {2,...,8}
{
\draw (\x+0,\y+0)--(\x,\y+1)--(\x+1,\y+1)--(\x+1,\y)--(\x+0,\y+0);
}
\foreach \x in {24,28}
\foreach \y in {1,...,9}
{
\draw (\x+0,\y+0)--(\x,\y+1)--(\x+1,\y+1)--(\x+1,\y)--(\x+0,\y+0);
}
\foreach \x in {25,27}
\foreach \y in {0,...,10}
{
\draw (\x+0,\y+0)--(\x,\y+1)--(\x+1,\y+1)--(\x+1,\y)--(\x+0,\y+0);
}
\foreach \x in {26}
\foreach \y in {-1,...,11}
{
\draw (\x+0,\y+0)--(\x,\y+1)--(\x+1,\y+1)--(\x+1,\y)--(\x+0,\y+0);
}


\foreach \x in {19,31}
\foreach \y in {-8}
{
\draw (\x+0,\y+0)--(\x,\y+1)--(\x+1,\y+1)--(\x+1,\y)--(\x+0,\y+0);
}
\foreach \x in {20,30}
\foreach \y in {-9,...,-7}
{
\draw (\x+0,\y+0)--(\x,\y+1)--(\x+1,\y+1)--(\x+1,\y)--(\x+0,\y+0);
}
\foreach \x in {21,29}
\foreach \y in {-10,...,-6}
{
\draw (\x+0,\y+0)--(\x,\y+1)--(\x+1,\y+1)--(\x+1,\y)--(\x+0,\y+0);
}
\foreach \x in {22,28}
\foreach \y in {-11,...,-5}
{
\draw (\x+0,\y+0)--(\x,\y+1)--(\x+1,\y+1)--(\x+1,\y)--(\x+0,\y+0);
}
\foreach \x in {23,27}
\foreach \y in {-12,...,-4}
{
\draw (\x+0,\y+0)--(\x,\y+1)--(\x+1,\y+1)--(\x+1,\y)--(\x+0,\y+0);
}
\foreach \x in {24,26}
\foreach \y in {-13,...,-3}
{
\draw (\x+0,\y+0)--(\x,\y+1)--(\x+1,\y+1)--(\x+1,\y)--(\x+0,\y+0);
}
\foreach \x in {25}
\foreach \y in {-14,...,-2}
{
\draw (\x+0,\y+0)--(\x,\y+1)--(\x+1,\y+1)--(\x+1,\y)--(\x+0,\y+0);
}


\foreach \x in {26,38}
\foreach \y in {-2}
{
\draw [fill=gray!20] (\x+0,\y+0)--(\x,\y+1)--(\x+1,\y+1)--(\x+1,\y)--(\x+0,\y+0);
}
\foreach \x in {27,37}
\foreach \y in {-3,...,-1}
{
\draw [fill=gray!20] (\x+0,\y+0)--(\x,\y+1)--(\x+1,\y+1)--(\x+1,\y)--(\x+0,\y+0);
}
\foreach \x in {28,36}
\foreach \y in {-4,...,0}
{
\draw [fill=gray!20] (\x+0,\y+0)--(\x,\y+1)--(\x+1,\y+1)--(\x+1,\y)--(\x+0,\y+0);
}
\foreach \x in {29,35}
\foreach \y in {-5,...,1}
{
\draw [fill=gray!20] (\x+0,\y+0)--(\x,\y+1)--(\x+1,\y+1)--(\x+1,\y)--(\x+0,\y+0);
}
\foreach \x in {30,34}
\foreach \y in {-6,...,2}
{
\draw [fill=gray!20] (\x+0,\y+0)--(\x,\y+1)--(\x+1,\y+1)--(\x+1,\y)--(\x+0,\y+0);
}
\foreach \x in {31,33}
\foreach \y in {-7,...,3}
{
\draw [fill=gray!20] (\x+0,\y+0)--(\x,\y+1)--(\x+1,\y+1)--(\x+1,\y)--(\x+0,\y+0);
}
\foreach \x in {32}
\foreach \y in {-8,...,4}
{
\draw [fill=gray!20] (\x+0,\y+0)--(\x,\y+1)--(\x+1,\y+1)--(\x+1,\y)--(\x+0,\y+0);
}


\filldraw[red] (18.5,-19.5) circle (.2);

\draw [->, line width=1pt] (18.5,-13.5)--(25.5,-7.5);
\draw [->, line width=1pt] (18.5,-13.5)--(12.5,-6.5);

\node at (25.5,-8.5) {\tiny $\textbf{i}$}; 
\node at (12.5,-7.5) {\tiny $\textbf{j}$}; 

\end{tikzpicture}
\end{center}
\caption{A level-$3$ square of order $(7,3)$.}\label{fig_s3}
\end{figure}

Finally, level-$3$ squares can be put together to form level-$3$ polyominoes. When putting two level-$3$ squares of order $(a,b)$ together, the vector that points from the bottom corner of one level-$3$ square to the bottom corner of a neighboring level-$3$ square is one of the following two vectors:
$$\textbf{I}=(ab,(a-1)b), \textbf{J}=(-(a-1)b,ab).$$
Figure \ref{fig_d3} shows an example of a level-$3$ triomino of order $(7,2)$, which is made up of three level-$3$ squares of order $(7,2)$. The bottom corners of the three level-$3$ squares are marked by red dots, and the vectors that point from the lower red dot to the upper red dots are $(14,12)$ and $(-12,14)$.


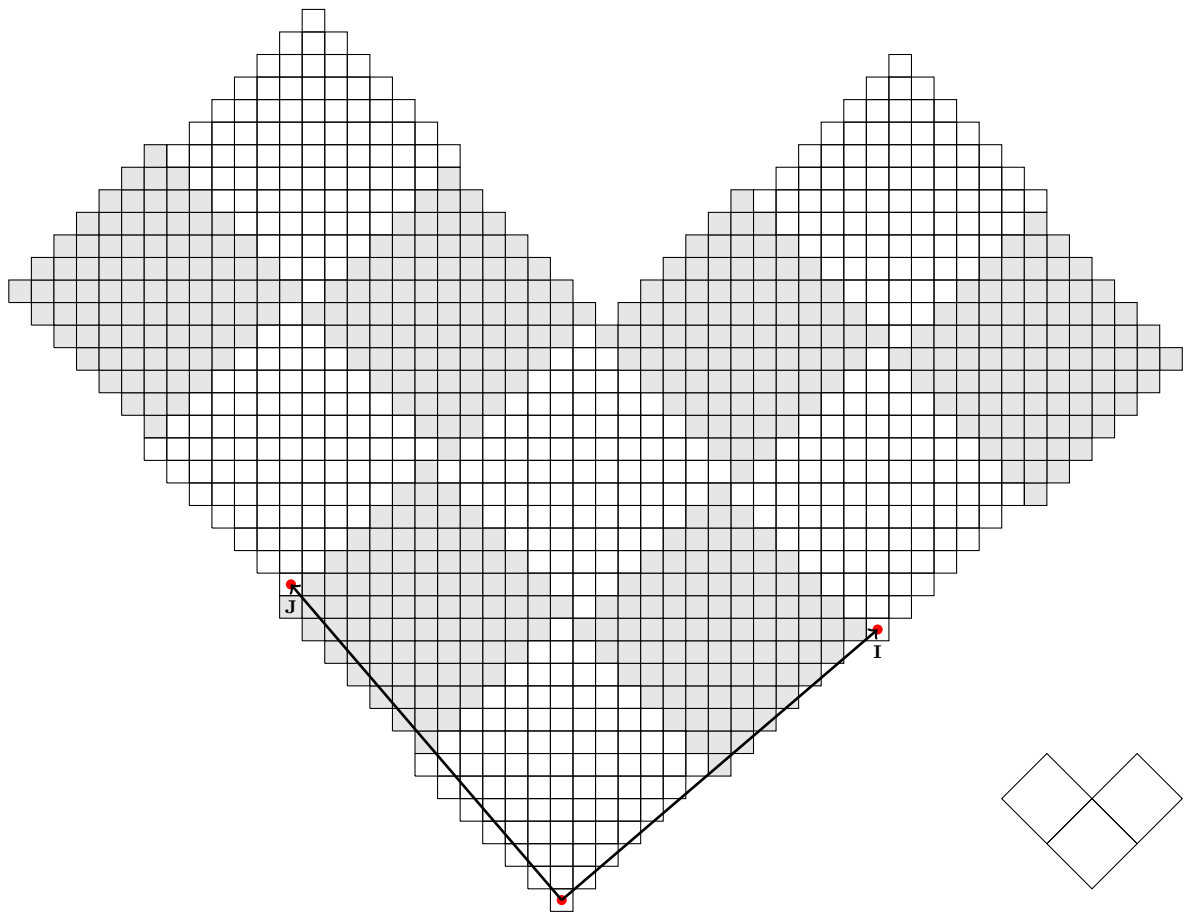
\begin{figure}[H]
\begin{center}
\begin{tikzpicture}[scale=0.3]

\foreach \x in {0,12}
\foreach \y in {0}
{
\draw [fill=gray!20] (\x+0,\y+0)--(\x,\y+1)--(\x+1,\y+1)--(\x+1,\y)--(\x+0,\y+0);
}
\foreach \x in {1,11}
\foreach \y in {-1,...,1}
{
\draw [fill=gray!20] (\x+0,\y+0)--(\x,\y+1)--(\x+1,\y+1)--(\x+1,\y)--(\x+0,\y+0);
}
\foreach \x in {2,10}
\foreach \y in {-2,...,2}
{
\draw [fill=gray!20] (\x+0,\y+0)--(\x,\y+1)--(\x+1,\y+1)--(\x+1,\y)--(\x+0,\y+0);
}
\foreach \x in {3,9}
\foreach \y in {-3,...,3}
{
\draw [fill=gray!20] (\x+0,\y+0)--(\x,\y+1)--(\x+1,\y+1)--(\x+1,\y)--(\x+0,\y+0);
}
\foreach \x in {4,8}
\foreach \y in {-4,...,4}
{
\draw [fill=gray!20] (\x+0,\y+0)--(\x,\y+1)--(\x+1,\y+1)--(\x+1,\y)--(\x+0,\y+0);
}
\foreach \x in {5,7}
\foreach \y in {-5,...,5}
{
\draw [fill=gray!20] (\x+0,\y+0)--(\x,\y+1)--(\x+1,\y+1)--(\x+1,\y)--(\x+0,\y+0);
}
\foreach \x in {6}
\foreach \y in {-6,...,6}
{
\draw [fill=gray!20] (\x+0,\y+0)--(\x,\y+1)--(\x+1,\y+1)--(\x+1,\y)--(\x+0,\y+0);
}


\foreach \x in {13,25}
\foreach \y in {-1}
{
\draw [fill=gray!20] (\x+0,\y+0)--(\x,\y+1)--(\x+1,\y+1)--(\x+1,\y)--(\x+0,\y+0);
}
\foreach \x in {14,24}
\foreach \y in {-2,...,0}
{
\draw [fill=gray!20] (\x+0,\y+0)--(\x,\y+1)--(\x+1,\y+1)--(\x+1,\y)--(\x+0,\y+0);
}
\foreach \x in {15,23}
\foreach \y in {-3,...,1}
{
\draw [fill=gray!20] (\x+0,\y+0)--(\x,\y+1)--(\x+1,\y+1)--(\x+1,\y)--(\x+0,\y+0);
}
\foreach \x in {16,22}
\foreach \y in {-4,...,2}
{
\draw [fill=gray!20] (\x+0,\y+0)--(\x,\y+1)--(\x+1,\y+1)--(\x+1,\y)--(\x+0,\y+0);
}
\foreach \x in {17,21}
\foreach \y in {-5,...,3}
{
\draw [fill=gray!20] (\x+0,\y+0)--(\x,\y+1)--(\x+1,\y+1)--(\x+1,\y)--(\x+0,\y+0);
}
\foreach \x in {18,20}
\foreach \y in {-6,...,4}
{
\draw [fill=gray!20] (\x+0,\y+0)--(\x,\y+1)--(\x+1,\y+1)--(\x+1,\y)--(\x+0,\y+0);
}
\foreach \x in {19}
\foreach \y in {-7,...,5}
{
\draw [fill=gray!20] (\x+0,\y+0)--(\x,\y+1)--(\x+1,\y+1)--(\x+1,\y)--(\x+0,\y+0);
}


\foreach \x in {7,19}
\foreach \y in {6}
{
\draw (\x+0,\y+0)--(\x,\y+1)--(\x+1,\y+1)--(\x+1,\y)--(\x+0,\y+0);
}
\foreach \x in {8,18}
\foreach \y in {5,...,7}
{
\draw (\x+0,\y+0)--(\x,\y+1)--(\x+1,\y+1)--(\x+1,\y)--(\x+0,\y+0);
}
\foreach \x in {9,17}
\foreach \y in {4,...,8}
{
\draw (\x+0,\y+0)--(\x,\y+1)--(\x+1,\y+1)--(\x+1,\y)--(\x+0,\y+0);
}
\foreach \x in {10,16}
\foreach \y in {3,...,9}
{
\draw (\x+0,\y+0)--(\x,\y+1)--(\x+1,\y+1)--(\x+1,\y)--(\x+0,\y+0);
}
\foreach \x in {11,15}
\foreach \y in {2,...,10}
{
\draw (\x+0,\y+0)--(\x,\y+1)--(\x+1,\y+1)--(\x+1,\y)--(\x+0,\y+0);
}
\foreach \x in {12,14}
\foreach \y in {1,...,11}
{
\draw (\x+0,\y+0)--(\x,\y+1)--(\x+1,\y+1)--(\x+1,\y)--(\x+0,\y+0);
}
\foreach \x in {13}
\foreach \y in {0,...,12}
{
\draw (\x+0,\y+0)--(\x,\y+1)--(\x+1,\y+1)--(\x+1,\y)--(\x+0,\y+0);
}


\foreach \x in {6,18}
\foreach \y in {-7}
{
\draw (\x+0,\y+0)--(\x,\y+1)--(\x+1,\y+1)--(\x+1,\y)--(\x+0,\y+0);
}
\foreach \x in {7,17}
\foreach \y in {-8,...,-6}
{
\draw (\x+0,\y+0)--(\x,\y+1)--(\x+1,\y+1)--(\x+1,\y)--(\x+0,\y+0);
}
\foreach \x in {8,16}
\foreach \y in {-9,...,-5}
{
\draw (\x+0,\y+0)--(\x,\y+1)--(\x+1,\y+1)--(\x+1,\y)--(\x+0,\y+0);
}
\foreach \x in {9,15}
\foreach \y in {-10,...,-4}
{
\draw (\x+0,\y+0)--(\x,\y+1)--(\x+1,\y+1)--(\x+1,\y)--(\x+0,\y+0);
}
\foreach \x in {10,14}
\foreach \y in {-11,...,-3}
{
\draw (\x+0,\y+0)--(\x,\y+1)--(\x+1,\y+1)--(\x+1,\y)--(\x+0,\y+0);
}
\foreach \x in {11,13}
\foreach \y in {-12,...,-2}
{
\draw (\x+0,\y+0)--(\x,\y+1)--(\x+1,\y+1)--(\x+1,\y)--(\x+0,\y+0);
}
\foreach \x in {12}
\foreach \y in {-13,...,-1}
{
\draw (\x+0,\y+0)--(\x,\y+1)--(\x+1,\y+1)--(\x+1,\y)--(\x+0,\y+0);
}


\foreach \x in {14,26}
\foreach \y in {12}
{
\draw [fill=gray!20] (\x+0,\y+0)--(\x,\y+1)--(\x+1,\y+1)--(\x+1,\y)--(\x+0,\y+0);
}
\foreach \x in {15,25}
\foreach \y in {11,...,13}
{
\draw [fill=gray!20] (\x+0,\y+0)--(\x,\y+1)--(\x+1,\y+1)--(\x+1,\y)--(\x+0,\y+0);
}
\foreach \x in {16,24}
\foreach \y in {10,...,14}
{
\draw [fill=gray!20] (\x+0,\y+0)--(\x,\y+1)--(\x+1,\y+1)--(\x+1,\y)--(\x+0,\y+0);
}
\foreach \x in {17,23}
\foreach \y in {9,...,15}
{
\draw [fill=gray!20] (\x+0,\y+0)--(\x,\y+1)--(\x+1,\y+1)--(\x+1,\y)--(\x+0,\y+0);
}
\foreach \x in {18,22}
\foreach \y in {8,...,16}
{
\draw [fill=gray!20] (\x+0,\y+0)--(\x,\y+1)--(\x+1,\y+1)--(\x+1,\y)--(\x+0,\y+0);
}
\foreach \x in {19,21}
\foreach \y in {7,...,17}
{
\draw [fill=gray!20] (\x+0,\y+0)--(\x,\y+1)--(\x+1,\y+1)--(\x+1,\y)--(\x+0,\y+0);
}
\foreach \x in {20}
\foreach \y in {6,...,18}
{
\draw [fill=gray!20] (\x+0,\y+0)--(\x,\y+1)--(\x+1,\y+1)--(\x+1,\y)--(\x+0,\y+0);
}


\foreach \x in {27,39}
\foreach \y in {11}
{
\draw [fill=gray!20] (\x+0,\y+0)--(\x,\y+1)--(\x+1,\y+1)--(\x+1,\y)--(\x+0,\y+0);
}
\foreach \x in {28,38}
\foreach \y in {10,...,12}
{
\draw [fill=gray!20] (\x+0,\y+0)--(\x,\y+1)--(\x+1,\y+1)--(\x+1,\y)--(\x+0,\y+0);
}
\foreach \x in {29,37}
\foreach \y in {9,...,13}
{
\draw [fill=gray!20] (\x+0,\y+0)--(\x,\y+1)--(\x+1,\y+1)--(\x+1,\y)--(\x+0,\y+0);
}
\foreach \x in {30,36}
\foreach \y in {8,...,14}
{
\draw [fill=gray!20] (\x+0,\y+0)--(\x,\y+1)--(\x+1,\y+1)--(\x+1,\y)--(\x+0,\y+0);
}
\foreach \x in {31,35}
\foreach \y in {7,...,15}
{
\draw [fill=gray!20] (\x+0,\y+0)--(\x,\y+1)--(\x+1,\y+1)--(\x+1,\y)--(\x+0,\y+0);
}
\foreach \x in {32,34}
\foreach \y in {6,...,16}
{
\draw [fill=gray!20] (\x+0,\y+0)--(\x,\y+1)--(\x+1,\y+1)--(\x+1,\y)--(\x+0,\y+0);
}
\foreach \x in {33}
\foreach \y in {5,...,17}
{
\draw [fill=gray!20] (\x+0,\y+0)--(\x,\y+1)--(\x+1,\y+1)--(\x+1,\y)--(\x+0,\y+0);
}


\foreach \x in {20,32}
\foreach \y in {5}
{
\draw (\x+0,\y+0)--(\x,\y+1)--(\x+1,\y+1)--(\x+1,\y)--(\x+0,\y+0);
}
\foreach \x in {21,31}
\foreach \y in {4,...,6}
{
\draw (\x+0,\y+0)--(\x,\y+1)--(\x+1,\y+1)--(\x+1,\y)--(\x+0,\y+0);
}
\foreach \x in {22,30}
\foreach \y in {3,...,7}
{
\draw (\x+0,\y+0)--(\x,\y+1)--(\x+1,\y+1)--(\x+1,\y)--(\x+0,\y+0);
}
\foreach \x in {23,29}
\foreach \y in {2,...,8}
{
\draw (\x+0,\y+0)--(\x,\y+1)--(\x+1,\y+1)--(\x+1,\y)--(\x+0,\y+0);
}
\foreach \x in {24,28}
\foreach \y in {1,...,9}
{
\draw (\x+0,\y+0)--(\x,\y+1)--(\x+1,\y+1)--(\x+1,\y)--(\x+0,\y+0);
}
\foreach \x in {25,27}
\foreach \y in {0,...,10}
{
\draw (\x+0,\y+0)--(\x,\y+1)--(\x+1,\y+1)--(\x+1,\y)--(\x+0,\y+0);
}
\foreach \x in {26}
\foreach \y in {-1,...,11}
{
\draw (\x+0,\y+0)--(\x,\y+1)--(\x+1,\y+1)--(\x+1,\y)--(\x+0,\y+0);
}


\foreach \x in {21,33}
\foreach \y in {18}
{
\draw (\x+0,\y+0)--(\x,\y+1)--(\x+1,\y+1)--(\x+1,\y)--(\x+0,\y+0);
}
\foreach \x in {22,32}
\foreach \y in {17,...,19}
{
\draw (\x+0,\y+0)--(\x,\y+1)--(\x+1,\y+1)--(\x+1,\y)--(\x+0,\y+0);
}
\foreach \x in {23,31}
\foreach \y in {16,...,20}
{
\draw (\x+0,\y+0)--(\x,\y+1)--(\x+1,\y+1)--(\x+1,\y)--(\x+0,\y+0);
}
\foreach \x in {24,30}
\foreach \y in {15,...,21}
{
\draw (\x+0,\y+0)--(\x,\y+1)--(\x+1,\y+1)--(\x+1,\y)--(\x+0,\y+0);
}
\foreach \x in {25,29}
\foreach \y in {14,...,22}
{
\draw (\x+0,\y+0)--(\x,\y+1)--(\x+1,\y+1)--(\x+1,\y)--(\x+0,\y+0);
}
\foreach \x in {26,28}
\foreach \y in {13,...,23}
{
\draw (\x+0,\y+0)--(\x,\y+1)--(\x+1,\y+1)--(\x+1,\y)--(\x+0,\y+0);
}
\foreach \x in {27}
\foreach \y in {12,...,24}
{
\draw (\x+0,\y+0)--(\x,\y+1)--(\x+1,\y+1)--(\x+1,\y)--(\x+0,\y+0);
}



\foreach \x in {-12,0}
\foreach \y in {14}
{
\draw [fill=gray!20] (\x+0,\y+0)--(\x,\y+1)--(\x+1,\y+1)--(\x+1,\y)--(\x+0,\y+0);
}
\foreach \x in {-11,-1}
\foreach \y in {13,...,15}
{
\draw [fill=gray!20] (\x+0,\y+0)--(\x,\y+1)--(\x+1,\y+1)--(\x+1,\y)--(\x+0,\y+0);
}
\foreach \x in {-10,-2}
\foreach \y in {12,...,16}
{
\draw [fill=gray!20] (\x+0,\y+0)--(\x,\y+1)--(\x+1,\y+1)--(\x+1,\y)--(\x+0,\y+0);
}
\foreach \x in {-9,-3}
\foreach \y in {11,...,17}
{
\draw [fill=gray!20] (\x+0,\y+0)--(\x,\y+1)--(\x+1,\y+1)--(\x+1,\y)--(\x+0,\y+0);
}
\foreach \x in {-8,-4}
\foreach \y in {10,...,18}
{
\draw [fill=gray!20] (\x+0,\y+0)--(\x,\y+1)--(\x+1,\y+1)--(\x+1,\y)--(\x+0,\y+0);
}
\foreach \x in {-7,-5}
\foreach \y in {9,...,19}
{
\draw [fill=gray!20] (\x+0,\y+0)--(\x,\y+1)--(\x+1,\y+1)--(\x+1,\y)--(\x+0,\y+0);
}
\foreach \x in {-6}
\foreach \y in {8,...,20}
{
\draw [fill=gray!20] (\x+0,\y+0)--(\x,\y+1)--(\x+1,\y+1)--(\x+1,\y)--(\x+0,\y+0);
}


\foreach \x in {1,13}
\foreach \y in {13}
{
\draw [fill=gray!20] (\x+0,\y+0)--(\x,\y+1)--(\x+1,\y+1)--(\x+1,\y)--(\x+0,\y+0);
}
\foreach \x in {2,12}
\foreach \y in {12,...,14}
{
\draw [fill=gray!20] (\x+0,\y+0)--(\x,\y+1)--(\x+1,\y+1)--(\x+1,\y)--(\x+0,\y+0);
}
\foreach \x in {3,11}
\foreach \y in {11,...,15}
{
\draw [fill=gray!20] (\x+0,\y+0)--(\x,\y+1)--(\x+1,\y+1)--(\x+1,\y)--(\x+0,\y+0);
}
\foreach \x in {4,10}
\foreach \y in {10,...,16}
{
\draw [fill=gray!20] (\x+0,\y+0)--(\x,\y+1)--(\x+1,\y+1)--(\x+1,\y)--(\x+0,\y+0);
}
\foreach \x in {5,9}
\foreach \y in {9,...,17}
{
\draw [fill=gray!20] (\x+0,\y+0)--(\x,\y+1)--(\x+1,\y+1)--(\x+1,\y)--(\x+0,\y+0);
}
\foreach \x in {6,8}
\foreach \y in {8,...,18}
{
\draw [fill=gray!20] (\x+0,\y+0)--(\x,\y+1)--(\x+1,\y+1)--(\x+1,\y)--(\x+0,\y+0);
}
\foreach \x in {7}
\foreach \y in {7,...,19}
{
\draw [fill=gray!20] (\x+0,\y+0)--(\x,\y+1)--(\x+1,\y+1)--(\x+1,\y)--(\x+0,\y+0);
}


\foreach \x in {-5,7}
\foreach \y in {20}
{
\draw (\x+0,\y+0)--(\x,\y+1)--(\x+1,\y+1)--(\x+1,\y)--(\x+0,\y+0);
}
\foreach \x in {-4,6}
\foreach \y in {19,...,21}
{
\draw (\x+0,\y+0)--(\x,\y+1)--(\x+1,\y+1)--(\x+1,\y)--(\x+0,\y+0);
}
\foreach \x in {-3,5}
\foreach \y in {18,...,22}
{
\draw (\x+0,\y+0)--(\x,\y+1)--(\x+1,\y+1)--(\x+1,\y)--(\x+0,\y+0);
}
\foreach \x in {-2,4}
\foreach \y in {17,...,23}
{
\draw (\x+0,\y+0)--(\x,\y+1)--(\x+1,\y+1)--(\x+1,\y)--(\x+0,\y+0);
}
\foreach \x in {-1,3}
\foreach \y in {16,...,24}
{
\draw (\x+0,\y+0)--(\x,\y+1)--(\x+1,\y+1)--(\x+1,\y)--(\x+0,\y+0);
}
\foreach \x in {0,2}
\foreach \y in {15,...,25}
{
\draw (\x+0,\y+0)--(\x,\y+1)--(\x+1,\y+1)--(\x+1,\y)--(\x+0,\y+0);
}
\foreach \x in {1}
\foreach \y in {14,...,26}
{
\draw (\x+0,\y+0)--(\x,\y+1)--(\x+1,\y+1)--(\x+1,\y)--(\x+0,\y+0);
}


\foreach \x in {-6,6}
\foreach \y in {7}
{
\draw (\x+0,\y+0)--(\x,\y+1)--(\x+1,\y+1)--(\x+1,\y)--(\x+0,\y+0);
}
\foreach \x in {-5,5}
\foreach \y in {6,...,8}
{
\draw (\x+0,\y+0)--(\x,\y+1)--(\x+1,\y+1)--(\x+1,\y)--(\x+0,\y+0);
}
\foreach \x in {-4,4}
\foreach \y in {5,...,9}
{
\draw (\x+0,\y+0)--(\x,\y+1)--(\x+1,\y+1)--(\x+1,\y)--(\x+0,\y+0);
}
\foreach \x in {-3,3}
\foreach \y in {4,...,10}
{
\draw (\x+0,\y+0)--(\x,\y+1)--(\x+1,\y+1)--(\x+1,\y)--(\x+0,\y+0);
}
\foreach \x in {-2,2}
\foreach \y in {3,...,11}
{
\draw (\x+0,\y+0)--(\x,\y+1)--(\x+1,\y+1)--(\x+1,\y)--(\x+0,\y+0);
}
\foreach \x in {-1,1}
\foreach \y in {2,...,12}
{
\draw (\x+0,\y+0)--(\x,\y+1)--(\x+1,\y+1)--(\x+1,\y)--(\x+0,\y+0);
}
\foreach \x in {0}
\foreach \y in {1,...,13}
{
\draw (\x+0,\y+0)--(\x,\y+1)--(\x+1,\y+1)--(\x+1,\y)--(\x+0,\y+0);
}

\draw (38,-6)--(40,-8)--(38,-10)--(36,-8)--(38,-6);
\draw (38,-10)--(36,-12)--(34,-10)--(36,-8)--(38,-10);
\draw (34,-10)--(36,-8)--(34,-6)--(32,-8)--(34,-10);


\filldraw[red] (26.5,-0.5) circle (.2);
\filldraw[red] (12.5,-12.5) circle (.2);
\filldraw[red] (0.5,1.5) circle (.2);

\draw [->,line width=1pt] (12.5,-12.5)--(26.5,-0.5);
\draw [->,line width=1pt] (12.5,-12.5)--(0.5,1.5);

\node at (26.5,-1.5) {\tiny $\textbf{I}$}; 
\node at (0.5,0.5) {\tiny $\textbf{J}$}; 

\end{tikzpicture}
\end{center}
\caption{A level-$3$ triomino consists of three level-$3$ squares of order $(7,2)$.}\label{fig_d3}
\end{figure}

\subsection{Dents and bumps of level-$2$ squares} We will use level-$2$ square of order 13 as our level-$2$ building blocks. Dents or bumps can be added to the northwest or southeast sides of these level-$2$ building blocks. The exact shape of the dent and bump is illustrated in Figure \ref{fig_db}, and the dashed lines indicate the original boundary of the level-$2$ squares before the dents or bumps are added. A level-$2$ square with a dent (resp., a bump) on the southeast side and a level-$2$ square with a bump (resp., a dent) on the northwest side can be put together perfectly without gaps or overlaps. 

However, if each of the two level-$2$ squares has a bump on their adjacent side, then the bumps will overlap. Therefore, this situation cannot happen in a complete tiling of the plane. On the other hand, if both adjacent sides have a dent, then there will be a gap left by the two level-$2$ squares. In this case, we can still form a complete tiling of the plane if we fill the gap exactly with another piece of polyomino. The \textit{tiny filler} is defined as the polyomino that can exactly fill the gap between two dents (see the polyomino in purple in Figure \ref{fig_tiny_filler}).


\begin{figure}[H]
\begin{center}
\begin{tikzpicture}[scale=0.3]

\draw [dashed] (2,1)--(2,2)--(3,2)--(3,3)--(4,3)--(4,4)--(5,4)--(5,5)--(6,5)--(6,6)--(7,6)--(7,7)--(8,7)--(8,8)--(9,8)--(9,9)--(10,9)--(10,10)--(11,10);

\draw [fill=gray!20] (-4,4)--(-3,4)--(-3,3)--(-2,3)--(-2,2)--(-1,2)--(-1,1)--(0,1)--(0,0)--(1,0)--(1,1)--(2,1)--(2,6)--(5,6)--(5,7)--(6,7)--(6,10)--(11,10)--(11,11)--(12,11)--(12,12)--(13,12)--(13,13)--(12,13)--(12,14)--(11,14)--(11,15)--(10,15)--(10,16)--(9,16)--(9,17);

\foreach \x in {20}
\foreach \y in {0}
{
\draw [fill=gray!20] (\x+-4,4+\y)--(\x+-3,4+\y)--(\x+-3,3+\y)--(\x+-2,3+\y)--(\x+-2,2+\y)--(\x+-1,2+\y)--(\x+-1,1+\y)--(\x+0,1+\y)--(\x+0,0+\y)--(\x+1,0+\y)--(\x+1,1+\y)--(\x+7,1+\y)--(\x+7,4+\y)--(\x+8,4+\y)--(\x+8,5+\y)--(\x+11,5+\y)--(\x+11,11+\y)--(\x+12,11+\y)--(\x+12,12+\y)--(\x+13,12+\y)--(\x+13,13+\y)--(\x+12,13+\y)--(\x+12,14+\y)--(\x+11,14+\y)--(\x+11,15+\y)--(\x+10,15+\y)--(\x+10,16+\y)--(\x+9,16+\y)--(\x+9,17+\y);

\draw [dashed] (\x+2,1+\y)--(\x+2,2+\y)--(\x+3,2+\y)--(\x+3,3+\y)--(\x+4,3+\y)--(\x+4,4+\y)--(\x+5,4+\y)--(\x+5,5+\y)--(\x+6,5+\y)--(\x+6,6+\y)--(\x+7,6+\y)--(\x+7,7+\y)--(\x+8,7+\y)--(\x+8,8+\y)--(\x+9,8+\y)--(\x+9,9+\y)--(\x+10,9+\y)--(\x+10,10+\y)--(\x+11,10+\y);
}

\foreach \x in {-4}
\foreach \y in {26}
{
\draw [fill=gray!20] (\x+4,-5+\y)--(\x+4,-4+\y)--(\x+3,-4+\y)--(\x+3,-3+\y)--(\x+2,-3+\y)--(\x+2,-2+\y)--(\x+1,-2+\y)--(\x+1,-1+\y)--(\x+0,-1+\y)--(\x+0,0+\y)--(\x+1,0+\y)--(\x+1,1+\y)--(\x+7,1+\y)--(\x+7,4+\y)--(\x+8,4+\y)--(\x+8,5+\y)--(\x+11,5+\y)--(\x+11,11+\y)--(\x+12,11+\y)--(\x+12,12+\y)--(\x+13,12+\y)--(\x+13,11+\y)--(\x+14,11+\y)--(\x+14,10+\y)--(\x+15,10+\y)--(\x+15,9+\y)--(\x+16,9+\y)--(\x+16,8+\y)--(\x+17,8+\y);

\draw [dashed] (\x+2,1+\y)--(\x+2,2+\y)--(\x+3,2+\y)--(\x+3,3+\y)--(\x+4,3+\y)--(\x+4,4+\y)--(\x+5,4+\y)--(\x+5,5+\y)--(\x+6,5+\y)--(\x+6,6+\y)--(\x+7,6+\y)--(\x+7,7+\y)--(\x+8,7+\y)--(\x+8,8+\y)--(\x+9,8+\y)--(\x+9,9+\y)--(\x+10,9+\y)--(\x+10,10+\y)--(\x+11,10+\y);
}

\foreach \x in {16}
\foreach \y in {26}
{
\draw [fill=gray!20] (\x+4,-5+\y)--(\x+4,-4+\y)--(\x+3,-4+\y)--(\x+3,-3+\y)--(\x+2,-3+\y)--(\x+2,-2+\y)--(\x+1,-2+\y)--(\x+1,-1+\y)--(\x+0,-1+\y)--(\x+0,0+\y)--(\x+1,0+\y)--(\x+1,1+\y)--(\x+2,1+\y)--(\x+2,6+\y)--(\x+5,6+\y)--(\x+5,7+\y)--(\x+6,7+\y)--(\x+6,10+\y)--(\x+11,10+\y)--(\x+11,11+\y)--(\x+12,11+\y)--(\x+12,12+\y)--(\x+13,12+\y)--(\x+13,11+\y)--(\x+14,11+\y)--(\x+14,10+\y)--(\x+15,10+\y)--(\x+15,9+\y)--(\x+16,9+\y)--(\x+16,8+\y)--(\x+17,8+\y);

\draw [dashed] (\x+2,1+\y)--(\x+2,2+\y)--(\x+3,2+\y)--(\x+3,3+\y)--(\x+4,3+\y)--(\x+4,4+\y)--(\x+5,4+\y)--(\x+5,5+\y)--(\x+6,5+\y)--(\x+6,6+\y)--(\x+7,6+\y)--(\x+7,7+\y)--(\x+8,7+\y)--(\x+8,8+\y)--(\x+9,8+\y)--(\x+9,9+\y)--(\x+10,9+\y)--(\x+10,10+\y)--(\x+11,10+\y);
}

\end{tikzpicture}
\end{center}
\caption{A dent or a bump on the northwest or southeast sides of a level-$2$ squares of order $13$.}\label{fig_db}
\end{figure}
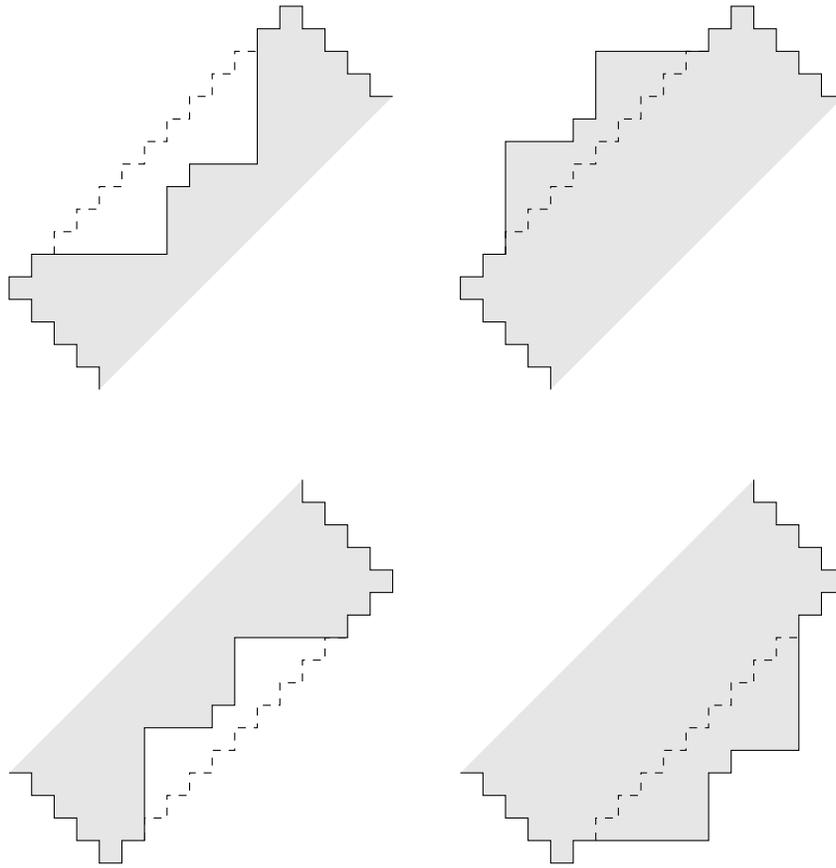


\begin{figure}[H]
\begin{center}
\begin{tikzpicture}[scale=0.3]

\draw [fill=gray!20] (-4,4)--(-3,4)--(-3,3)--(-2,3)--(-2,2)--(-1,2)--(-1,1)--(0,1)--(0,0)--(1,0)--(1,1)--(2,1)--(2,2)--(3,2)--(3,3)--(4,3)--(4,4)--(5,4)--(5,5)--(6,5)--(6,6)--(7,6)--(7,7)--(8,7)--(8,8)--(9,8)--(9,9)--(10,9)--(10,10)--(11,10)--(11,11)--(12,11)--(12,12)--(13,12)--(13,13)--(12,13)--(12,14)--(11,14)--(11,15)--(10,15)--(10,16)--(9,16)--(9,17);

\draw [fill=gray!20] (4,-5)--(4,-4)--(3,-4)--(3,-3)--(2,-3)--(2,-2)--(1,-2)--(1,-1)--(0,-1)--(0,0)--(1,0)--(1,1)--(2,1)--(2,2)--(3,2)--(3,3)--(4,3)--(4,4)--(5,4)--(5,5)--(6,5)--(6,6)--(7,6)--(7,7)--(8,7)--(8,8)--(9,8)--(9,9)--(10,9)--(10,10)--(11,10)--(11,11)--(12,11)--(12,12)--(13,12)--(13,11)--(14,11)--(14,10)--(15,10)--(15,9)--(16,9)--(16,8)--(17,8);

\draw [fill=violet!20] (2,1)--(2,6)--(5,6)--(5,7)--(6,7)--(6,10)--(11,10)--(11,5)--(8,5)--(8,4)--(7,4)--(7,1)--(2,1);
\draw [black!50] (2,2)--(3,2)--(3,3)--(4,3)--(4,4)--(5,4)--(5,5)--(6,5)--(6,6)--(7,6)--(7,7)--(8,7)--(8,8)--(9,8)--(9,9)--(10,9)--(10,10)--(11,10);

\end{tikzpicture}
\end{center}
\caption{A tiny filler between two level-$2$ squares of order $13$.}\label{fig_tiny_filler}
\end{figure}
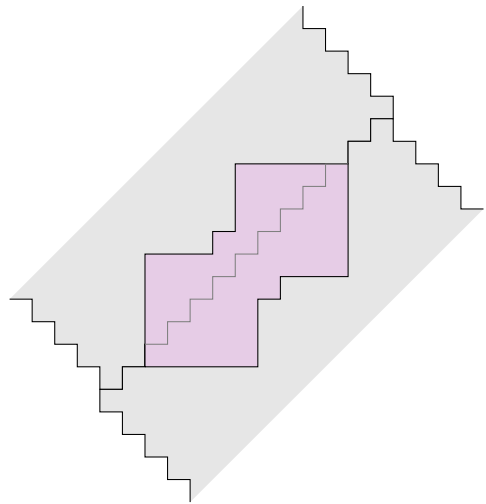

Obviously, the tiny filler is orthogonally convex. Because there exists a pseudo-hexagonal characterization for the translational tilability of a single connected tile \cite{bn91,w15}, it is easy to check that the tiny filler cannot tile the plane with translated copies of itself.

\begin{Lemma}\label{lem_tiny_1}
    The tiny filler alone cannot tile the plane by translation.
\end{Lemma}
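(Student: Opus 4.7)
The plan is to derive a local obstruction at two specific concave corners of the tile. First, I would trace the tile's boundary counterclockwise to identify its four concave (reflex) corners; two of them, at $(7,4)$ and $(8,5)$, are of the same ``SE-facing'' type, meaning locally the tile fills the NW, NE, and SW quadrants and leaves the SE quadrant empty, and these two concave corners are at relative offset $(1,1)$.

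Next, I would establish the key rigidity: in any hypothetical translational tiling, each SE-facing concave corner must be filled by a translate of the tile whose ``top-left'' convex corner (a convex corner whose interior is locally in the SE direction) is placed exactly at the concave-corner point. This is argued by ruling out alternatives: if the covering tile's boundary passes through the corner as a straight edge, or matches it with a concave corner, or contains the point in its interior, then an overlap in one of the three already-filled quadrants is forced. The filler has exactly three top-left convex corners, at $(6,10)$, $(5,7)$, $(2,6)$, whose pairwise differences are $(-1,-3)$, $(-4,-4)$, $(-3,-1)$. None of these equals $(1,1)$, so no single translate can simultaneously fill both SE-facing concave corners.

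Hence two distinct translates $T+v_1$ and $T+v_2$ are required, with $v_1 \in \{(1,-6),(2,-3),(5,-2)\}$ and $v_2 \in \{(2,-5),(3,-2),(6,-1)\}$, giving $3 \times 3 = 9$ configurations. The final step is to verify that in each configuration $T+v_1$ and $T+v_2$ overlap. The differences $v_2 - v_1$ lie in $\{(1,1),(2,4),(5,5),(0,-2),(4,2),(-3,-3),(-2,0)\}$, and for each such vector $v$ a single unit cell of $T$ maps under $v$ into another unit cell of $T$ (for instance, with $v=(1,1)$ the cell $[2,3]\times[1,2]$ maps to $[3,4]\times[2,3]\subset T$). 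The main obstacle is this finite case analysis, but each individual case reduces to a one-cell membership check. Equivalently, the same obstruction can be phrased via the Beauquier--Nivat pseudo-hexagonal characterization referenced in the paragraph just before the lemma, by observing that no cyclic shift of the length-$36$ boundary word admits a factorization into at most three palindromic factors.
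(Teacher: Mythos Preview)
Your argument is correct and is genuinely different from what the paper does. The paper gives no detailed proof at all: the sentence immediately preceding the lemma simply invokes the Beauquier--Nivat pseudo-hexagonal characterization and declares the check ``easy''. You instead produce a self-contained local obstruction at the two SE-facing reflex corners $(7,4)$ and $(8,5)$, reducing everything to a finite ($3\times 3$) case analysis of overlaps that can each be certified by a single cell. This buys you independence from the general structure theorem, at the cost of the explicit casework; the paper's route is shorter to state but outsources the work to the cited criterion.

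One small terminological slip in your closing remark: the Beauquier--Nivat criterion does not ask for a factorization into \emph{palindromic} factors. It asks for a decomposition of the boundary word as $XYZ\,\widetilde{X}\,\widetilde{Y}\,\widetilde{Z}$ (with one of the factors possibly empty), where $\widetilde{X}$ is the path $X$ traversed backwards with each step reversed; the individual factors $X,Y,Z$ need not be palindromes. This does not affect your main argument, which stands on its own.
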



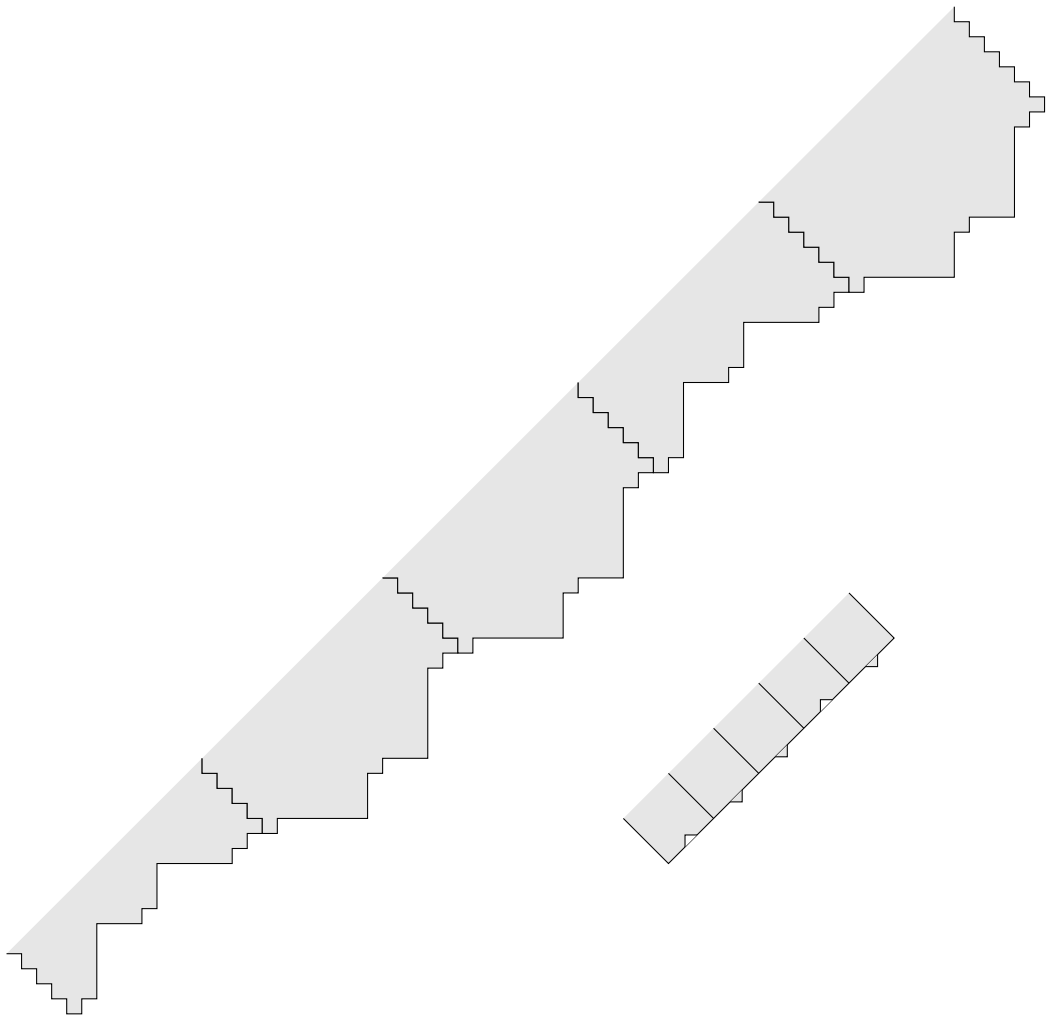
\begin{figure}[H]
\begin{center}
\begin{tikzpicture}[scale=0.2]

\draw [fill=gray!20] (-4,4)--(-3,4)--(-3,3)--(-2,3)--(-2,2)--(-1,2)--(-1,1)--(0,1)--(0,0)--(1,0)--(1,1)--(2,1)--(2,6)--(5,6)--(5,7)--(6,7)--(6,10)--(11,10)--(11,11)--(12,11)--(12,12)--(13,12)--(13,13)--(12,13)--(12,14)--(11,14)--(11,15)--(10,15)--(10,16)--(9,16)--(9,17);

\foreach \x in {13}
\foreach \y in {12}
{
\draw [fill=gray!20] (\x+-4,5+\y)--(\x+-4,4+\y)--(\x+-3,4+\y)--(\x+-3,3+\y)--(\x+-2,3+\y)--(\x+-2,2+\y)--(\x+-1,2+\y)--(\x+-1,1+\y)--(\x+0,1+\y)--(\x+0,0+\y)--(\x+1,0+\y)--(\x+1,1+\y)--(\x+7,1+\y)--(\x+7,4+\y)--(\x+8,4+\y)--(\x+8,5+\y)--(\x+11,5+\y)--(\x+11,11+\y)--(\x+12,11+\y)--(\x+12,12+\y)--(\x+13,12+\y)--(\x+13,13+\y)--(\x+12,13+\y)--(\x+12,14+\y)--(\x+11,14+\y)--(\x+11,15+\y)--(\x+10,15+\y)--(\x+10,16+\y)--(\x+9,16+\y)--(\x+9,17+\y)--(\x+8,17+\y);
}

\foreach \x in {26}
\foreach \y in {24}
{
\draw [fill=gray!20] (\x-5,5+\y)--(\x+-4,5+\y)--(\x+-4,4+\y)--(\x+-3,4+\y)--(\x+-3,3+\y)--(\x+-2,3+\y)--(\x+-2,2+\y)--(\x+-1,2+\y)--(\x+-1,1+\y)--(\x+0,1+\y)--(\x+0,0+\y)--(\x+1,0+\y)--(\x+1,1+\y)--(\x+7,1+\y)--(\x+7,4+\y)--(\x+8,4+\y)--(\x+8,5+\y)--(\x+11,5+\y)--(\x+11,11+\y)--(\x+12,11+\y)--(\x+12,12+\y)--(\x+13,12+\y)--(\x+13,13+\y)--(\x+12,13+\y)--(\x+12,14+\y)--(\x+11,14+\y)--(\x+11,15+\y)--(\x+10,15+\y)--(\x+10,16+\y)--(\x+9,16+\y)--(\x+9,17+\y)--(\x+8,17+\y)--(\x+8,18+\y);
}

\foreach \x in {39}
\foreach \y in {36}
{
\draw [fill=gray!20] (\x-5,6+\y)--(\x-5,5+\y)--(\x+-4,5+\y)--(\x+-4,4+\y)--(\x+-3,4+\y)--(\x+-3,3+\y)--(\x+-2,3+\y)--(\x+-2,2+\y)--(\x+-1,2+\y)--(\x+-1,1+\y)--(\x+0,1+\y)--(\x+0,0+\y)--(\x+1,0+\y)--(\x+1,1+\y)--(\x+2,1+\y)--(\x+2,6+\y)--(\x+5,6+\y)--(\x+5,7+\y)--(\x+6,7+\y)--(\x+6,10+\y)--(\x+11,10+\y)--(\x+11,11+\y)--(\x+12,11+\y)--(\x+12,12+\y)--(\x+13,12+\y)--(\x+13,13+\y)--(\x+12,13+\y)--(\x+12,14+\y)--(\x+11,14+\y)--(\x+11,15+\y)--(\x+10,15+\y)--(\x+10,16+\y)--(\x+9,16+\y)--(\x+9,17+\y)--(\x+8,17+\y)--(\x+8,18+\y)--(\x+7,18+\y);
}

\foreach \x in {52}
\foreach \y in {48}
{
\draw [fill=gray!20] (\x-6,6+\y)--(\x-5,6+\y)--(\x-5,5+\y)--(\x+-4,5+\y)--(\x+-4,4+\y)--(\x+-3,4+\y)--(\x+-3,3+\y)--(\x+-2,3+\y)--(\x+-2,2+\y)--(\x+-1,2+\y)--(\x+-1,1+\y)--(\x+0,1+\y)--(\x+0,0+\y)--(\x+1,0+\y)--(\x+1,1+\y)--(\x+7,1+\y)--(\x+7,4+\y)--(\x+8,4+\y)--(\x+8,5+\y)--(\x+11,5+\y)--(\x+11,11+\y)--(\x+12,11+\y)--(\x+12,12+\y)--(\x+13,12+\y)--(\x+13,13+\y)--(\x+12,13+\y)--(\x+12,14+\y)--(\x+11,14+\y)--(\x+11,15+\y)--(\x+10,15+\y)--(\x+10,16+\y)--(\x+9,16+\y)--(\x+9,17+\y)--(\x+8,17+\y)--(\x+8,18+\y)--(\x+7,18+\y)--(\x+7,19+\y);
}

\draw [fill=gray!20]  (37,13)--(40,10)--(55,25)--(52,28);
\draw  (40,16)--(43,13); \draw  (43,19)--(46,16);  \draw  (46,22)--(49,19); \draw (52,22)--(49,25);

\foreach \x in {0,3}
{
\draw [fill=white] (3*\x+41.1,3*\x+11.1)--(3*\x+41.1,3*\x+11.9)--(3*\x+41.9,3*\x+11.9);
}
\foreach \x in {1,2,4}
{
\draw [fill=gray!20] (3*\x+41.1,3*\x+11.1)--(3*\x+41.9,3*\x+11.1)--(3*\x+41.9,3*\x+11.9);
}

\end{tikzpicture}
\end{center}
\caption{Symbolic representation of dents and bumps on the side of a level-$3$ square}\label{fig_db2}
\end{figure}

Recall that a level-$3$ square with order $(13,b)$ has $b$ level-$2$ squares on each side, and a dent or a bump can be added to each of the level-$2$ squares. Figure \ref{fig_db2} illustrates the southeast side of a level-$2$ square of order $(13,5)$. A dent or a bump is added to each of the $5$ level-$2$ squares on the southeast side. We use $0$ and $1$ to denote a level-$2$ square with a dent and a bump, respectively. The sequence of five level-$2$ squares in Figure \ref{fig_db2} can be denoted by the binary string $10110$ (in clockwise order of the boundary). So, the information can be encoded as binary stings in the form of dents or bumps on the sides of level-$3$ squares. In the bottom right of Figure \ref{fig_db2}, there is also a symbolic representation of the $5$ level-$2$ squares, and we will use this symbolic representation in the next subsection.

Note that a level-$3$ square with dents or bumps is still orthogonally convex.

\subsection{Notation for the bumps and dents of level-$3$ squares} In the next section, we will use level-$3$ squares with order $(13,22)$ as our building blocks to construct the set of polyominoes that simulates a set of Wang tiles. In other words, the level-$3$ building blocks consist of $22\times 22$ level-$2$ squares, and each level-$2$ square is of order $13$. Except for the tiny filler illustrated in Figure \ref{fig_tiny_filler}, all other polyominoes that we will construct are based on level-$3$ squares of order $(13,22)$.

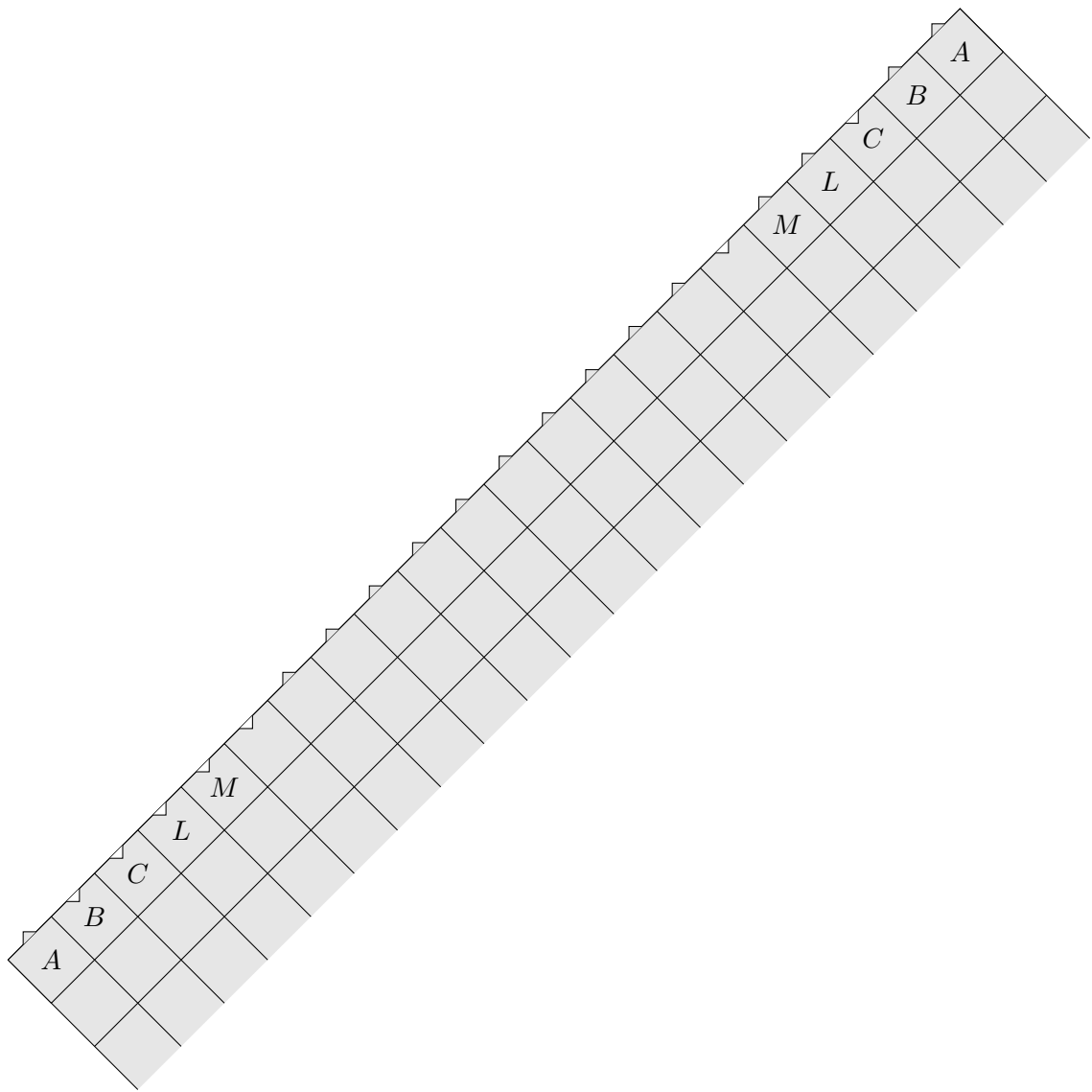
\begin{figure}[H]
\begin{center}
\begin{tikzpicture}[scale=0.3]

\draw [fill=gray!20] (6,-6)--(0,0)--(44,44)--(50,38);
\draw [fill=gray!20] (0,0)--(2,-2)--(46,42)--(44,44)--(0,0);

\foreach \x in {1,...,21}
{
\draw (2*\x,2*\x)--(2*\x+6,2*\x-6);
}
\draw (4,-4)--(48,40);

\node at (2,0) {$A$}; 
\node at (4,2) {$B$}; 
\node at (6,4) {$C$}; 
\node at (8,6) {$L$}; 
\node at (10,8) {$M$}; 

\node at (44,42) {$A$}; 
\node at (42,40) {$B$}; 
\node at (40,38) {$C$}; 
\node at (38,36) {$L$}; 
\node at (36,34) {$M$}; 

\foreach \x in {0, 6,7,8,9,10,11,12,13,14,15, 17,18,20,21}
{
\draw [fill=gray!20] (2*\x+0.7,2*\x+0.7)--(2*\x+0.7,2*\x+1.3)--(2*\x+1.3,2*\x+1.3);
}

\foreach \x in {1,2,3,4,5,16,19}
{
\draw [fill=white] (2*\x+0.7,2*\x+0.7)--(2*\x+1.3,2*\x+0.7)--(2*\x+1.3,2*\x+1.3);
}

\end{tikzpicture}
\end{center}
\caption{A level-$3$ square $\{A|C\}$ with bumps and dents on the northwest side.}\label{fig_notation1}
\end{figure}

By definition, the northwest or southeast sides of a level-$3$ have $22$ level-$2$ squares, each of which may have a bump or a dent (see Figure \ref{fig_notation1} and Figure \ref{fig_notation2} for two examples in symbolic representation). Note that in Figure \ref{fig_notation1} and Figure \ref{fig_notation2}, each gray square represents a level-$2$ square of order $13$.

\begin{figure}[H]
\begin{center}
\begin{tikzpicture}[scale=0.3]

\draw [fill=gray!20] (-4,4)--(0,0)--(44,44)--(40,48);
\draw [fill=gray!20] (0,0)--(2,-2)--(46,42)--(44,44)--(0,0);

\foreach \x in {1,...,21}
{
\draw (2*\x-4,2*\x+4)--(2*\x+2,2*\x-2);
}
\draw (-2,2)--(42,46);

\node at (2,0) {$A$}; 
\node at (4,2) {$B$}; 
\node at (6,4) {$C$}; 
\node at (8,6) {$L$}; 
\node at (10,8) {$M$}; 

\node at (44,42) {$A$}; 
\node at (42,40) {$B$}; 
\node at (40,38) {$C$}; 
\node at (38,36) {$L$}; 
\node at (36,34) {$M$}; 

\foreach \x in {3,5,16}
{
\draw [fill=gray!20] (2*\x+2.7,2*\x-1.3)--(2*\x+3.3,2*\x-1.3)--(2*\x+3.3,2*\x-0.7);
}

\foreach \x in {0,1,2,4,6,7,8,9,10,11,12,13,14,15,17,18,19,20,21}
{
\draw [fill=white] (2*\x+2.7,2*\x-1.3)--(2*\x+2.7,2*\x-0.7)--(2*\x+3.3,2*\x-0.7);
}

\end{tikzpicture}
\end{center}
\caption{A level-$3$ square $\{|A,B,C,M\}$ with bumps and dents on the southeast side.}\label{fig_notation2}
\end{figure}
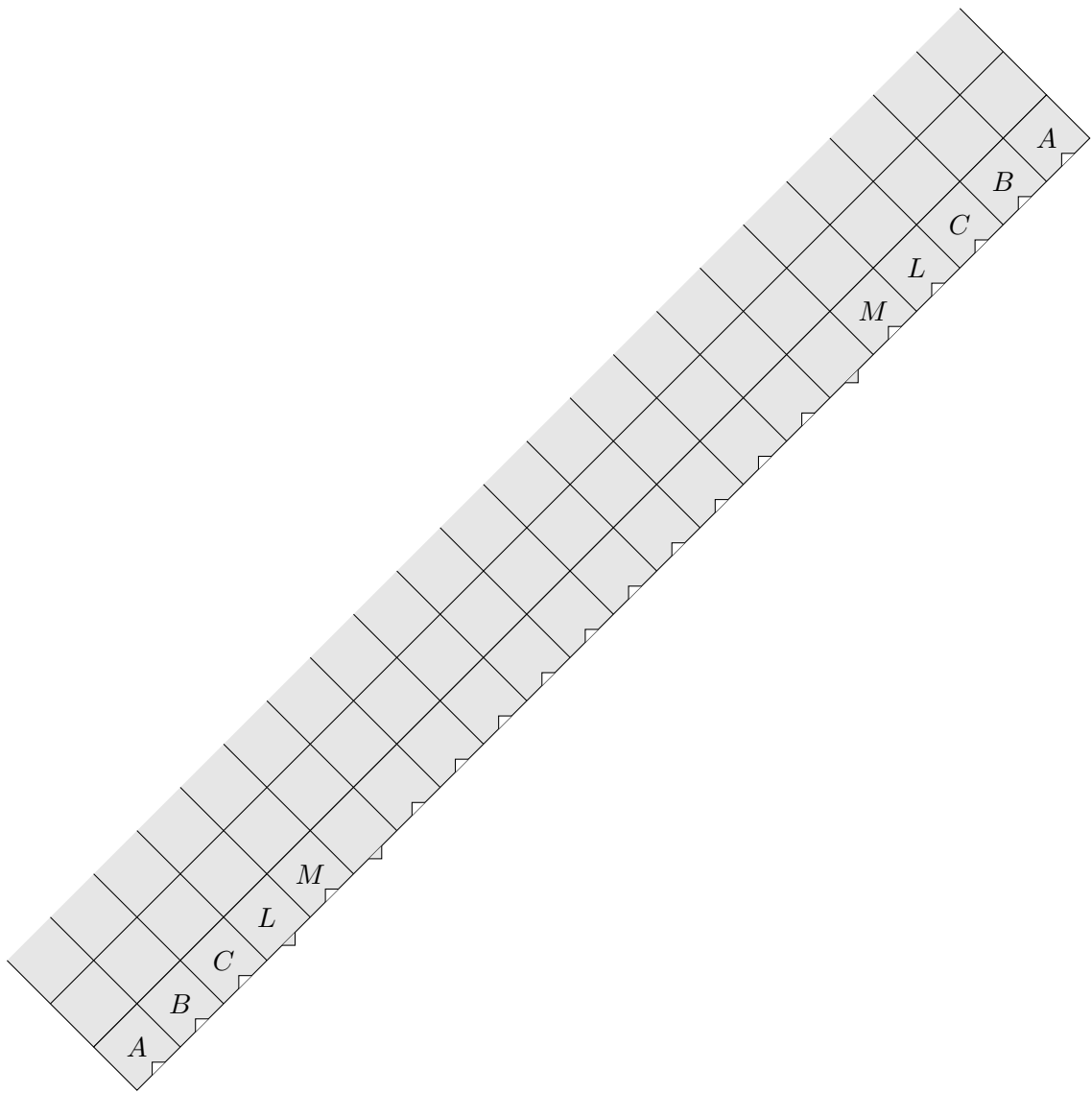

As mentioned in the previous subsection, the dents and bumps on one side of the level-$3$ square can be naturally denoted by a binary string of length $22$. However, the dents and bumps that we will actually add to the sides of the level-$3$ squares satisfy several special properties, hence we will introduce a more compact and handy notation in the next few paragraphs.

The sequence of level-$2$ squares on a northwest side or a southeast side is divided into three parts (in clockwise order): the first part consists of $5$ level-$2$ squares, the second part consists of $12$ level-$2$ squares, and the third part consists of $5$ level-$2$ squares. The $5$ squares in the first part are labeled $A$, $B$, $C$, $L$, $M$ in clockwise order, and the $5$ squares in the third part are reversely labeled as $M$, $L$, $C$, $B$, $A$ (see Figures \ref{fig_notation1} and \ref{fig_notation2}).

On the northwest side, the dents or bumps of the second part always form the sequence 011111111110, and on the southeast side, the second part is always 100000000001. This ensures that two level-$3$ polyominoes must be aligned when they are placed next to each other. In exact words, the level-$3$ squares must be matched edge-to-edge (this fact will be proved in Lemma \ref{lem_align_3} of Section \ref{sec_tileset}, after the complete set of $7$ polyominoes have been define). In particular, the second part of the southeast side of one level-$3$ square must be adjacent to the second part of the northwest side of another level-$3$ square. As a consequence, the first part (resp., third part) of the southeast side of one level-$3$ square must be adjacent to the third part (resp., first part) of another one.

To construct the set of $7$ level-$3$ polyominoes in the next section, we will use level-$3$ squares satisfying the following additional conditions: the first part contains at most four bumps (all other level-$2$ squares in the first part have dents), and the third part contains at most four dents (all other level-$2$ squares in the third parts have bumps)\footnote{These two conditions make sure that $10$ consecutive dents or $10$ consecutive bumps only appear in the second part within a level-$3$ square, and cannot appear across two adjacent level-$3$ squares on the side of a level-$3$ polyomino.}. We introduce the notation $\{X_1,...,X_i|Y_1,...,Y_j\}$ to label the dents and bumps on the northwest side or southeast side of a level-$3$ square as a whole. The notation can be interpreted as a set with two parts, a left part and a right part, separated by a vertical bar~$|$. On the left part of the set, we list all the labels of level-$2$ squares with a bump in the first part on a side of the level-$3$ square. On the right part, we list all the labels of level-$2$ squares with a dent in the third part. In other words, in this notation, we only specify the bumps of the first part and the dents of the third part. As we have mentioned earlier, the dents or bumps on the second part of a side are fixed (depending on whether it is a northwest side or a southeast side), so we do not need to list them in our notation.

We give two examples of this notation with illustrations. In Figure \ref{fig_notation1}, the northwest side is denoted by $\{A|C\}$ which means that square $A$ in the first part has a bump, and square $C$ in the third part has a dent. In Figure \ref{fig_notation2}, the southeast side is denoted by $\{|A,B,C,M\}$, because there are no bumps in the first part, and there are four dents on the southeast sides of level-$2$ squares $A$, $B$, $C$ and $M$ of the third part. 

If a level-$3$ square is labeled by $S=\{X_1,...,X_i|Y_1,...,Y_j\}$, then we use $S_L=\{X_1,...,X_i\}$ and $S_R=\{Y_1,...,Y_i\}$ to denote its left and right part, respectively. With this notation, the following lemma is straightforward. 

\begin{Lemma}\label{lem_level3}
 Let $S$ be a level-$3$ square with dents or bumps on its southeast side, and $T$ be another level-$3$ square with dents or bumps on its northwest side, then $S$ can be placed next to the northwest side of $T$ without overlap if and only if $S_L\subseteq T_R$ and $T_L\subseteq S_R$.
\end{Lemma}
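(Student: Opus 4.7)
The plan is to verify the equivalence by a position-by-position analysis of the $22$ level-$2$ squares along the shared edge. First I would set up the alignment. When $S$ sits to the northwest of $T$, the shared edge is the southeast side of $S$ and the northwest side of $T$, and it is traversed in opposite clockwise orientations by the two boundaries. By the labeling convention stated just before the lemma, the first part of each side is labeled $A, B, C, L, M$ in clockwise order while the third part is labeled $M, L, C, B, A$ in clockwise order; this reversed labeling exactly compensates for the reversal of clockwise direction on adjacent sides, so that a label $X \in \{A, B, C, L, M\}$ in the first part of $S$'s southeast side lies directly opposite the label $X$ in the third part of $T$'s northwest side, and symmetrically for the third part of $S$'s side and the first part of $T$'s.

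Next I would dispose of the middle $12$ positions. The stated conventions fix these to be $100000000001$ on any southeast side and $011111111110$ on any northwest side. These strings are bit-wise complementary, so at every middle position a bump on one side meets a dent on the other, producing a perfect fit by the shapes in Figure~\ref{fig_db} and hence no overlap. It then remains to translate the no-overlap condition at the $5+5$ end positions into the set-containments in the statement. By definition, $S$ has a bump at position $X$ of its first part iff $X \in S_L$, and $T$ has a bump at the matched position of its third part iff $X \notin T_R$. Two bumps at the same position would overlap, while any bump-dent pairing fits cleanly; so the no-overlap requirement at these five positions is exactly $X \in S_L \Rightarrow X \in T_R$, i.e.\ $S_L \subseteq T_R$. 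A symmetric analysis at the opposite end yields $T_L \subseteq S_R$, and combining the two gives the forward direction. The converse is immediate, since the two inclusions together preclude any bump-bump meeting at all $22$ positions.

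The main (and really only) subtlety will be the orientation bookkeeping: verifying that the reversed labels $M, L, C, B, A$ of the third part, combined with the reversal of clockwise direction on adjacent sides, do make the first part of $S$'s southeast side face the third part of $T$'s northwest side in a letter-for-letter way. Once this orientation check is in place, the rest of the argument is a direct unpacking of the notation $\{X_1, \dots, X_i \mid Y_1, \dots, Y_j\}$ and of the dent/bump shapes fixed in Figure~\ref{fig_db}.
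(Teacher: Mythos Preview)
Your proposal is correct and is exactly the kind of unpacking the paper intends. In fact, the paper provides no proof at all for this lemma: it simply introduces the notation $\{X_1,\dots,X_i\mid Y_1,\dots,Y_j\}$, remarks that ``with this notation, the following lemma is straightforward,'' and moves on. Your position-by-position analysis---checking the fixed complementary middle strings, then translating the no-bump-against-bump condition at the five end positions into $S_L\subseteq T_R$ and $T_L\subseteq S_R$---is precisely the verification the reader is expected to perform mentally. The orientation bookkeeping you flag as the only subtlety (that the reversed labeling $M,L,C,B,A$ of the third part compensates for the reversal of clockwise direction across the shared edge) is indeed the one point that requires care, and you have it right.
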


Note that in Lemma \ref{lem_level3}, the conditions only guarantee the two level-$3$ squares do not overlap, but there may be some gaps left between the dents of two adjacent level-$2$ squares. This gap can be filled by the tiny filler.

\section{The Set of Seven Polyominoes}\label{sec_tileset}

Given a set of Wang tiles, we will construct a set of $7$ orthogonally convex polyominoes that simulates the set of Wang tiles. We will take the set of $3$ Wang tiles illustrated in Figure \ref{fig_wang_set} as an example to demonstrate the construction of the set of orthogonally convex polyominoes. One of them, the tiny filler has already been introduced in the previous section. The tiny filler does not depend on the set of Wang tiles, and it is always the same for any set of Wang tiles. The tiny filler is the only polyomino in the set of $7$ orthogonally convex polyominoes that is smaller than a level-$2$ building block of order $13$. All other $6$ polyominoes are level-$3$ polyominoes which are made up of level-$3$ squares. The $6$ polyominoes are: one encoder (Figure \ref{fig_encoder}), two linkers (Figure \ref{fig_linkers}) and three partial locators that always come in a group to form a locator (Figure \ref{fig_locator}).

In Figure \ref{fig_encoder}, Figure \ref{fig_linkers} and Figure \ref{fig_locator}, each gray or colored square symbolically represents a level-$3$ square of order $(13,22)$. In these Figures, a square without a label represents a normal level-$3$ square, and a square with a label represents a level-$3$ square with bumps or dents to its northwest side or southeast side on the boundary of the level-$3$ polyominoes. Because there is no level-$3$ square with both northwest and southeast sides on the outer boundary of the level-$3$ polyominoes, this labeling notation will not cause any confusion.


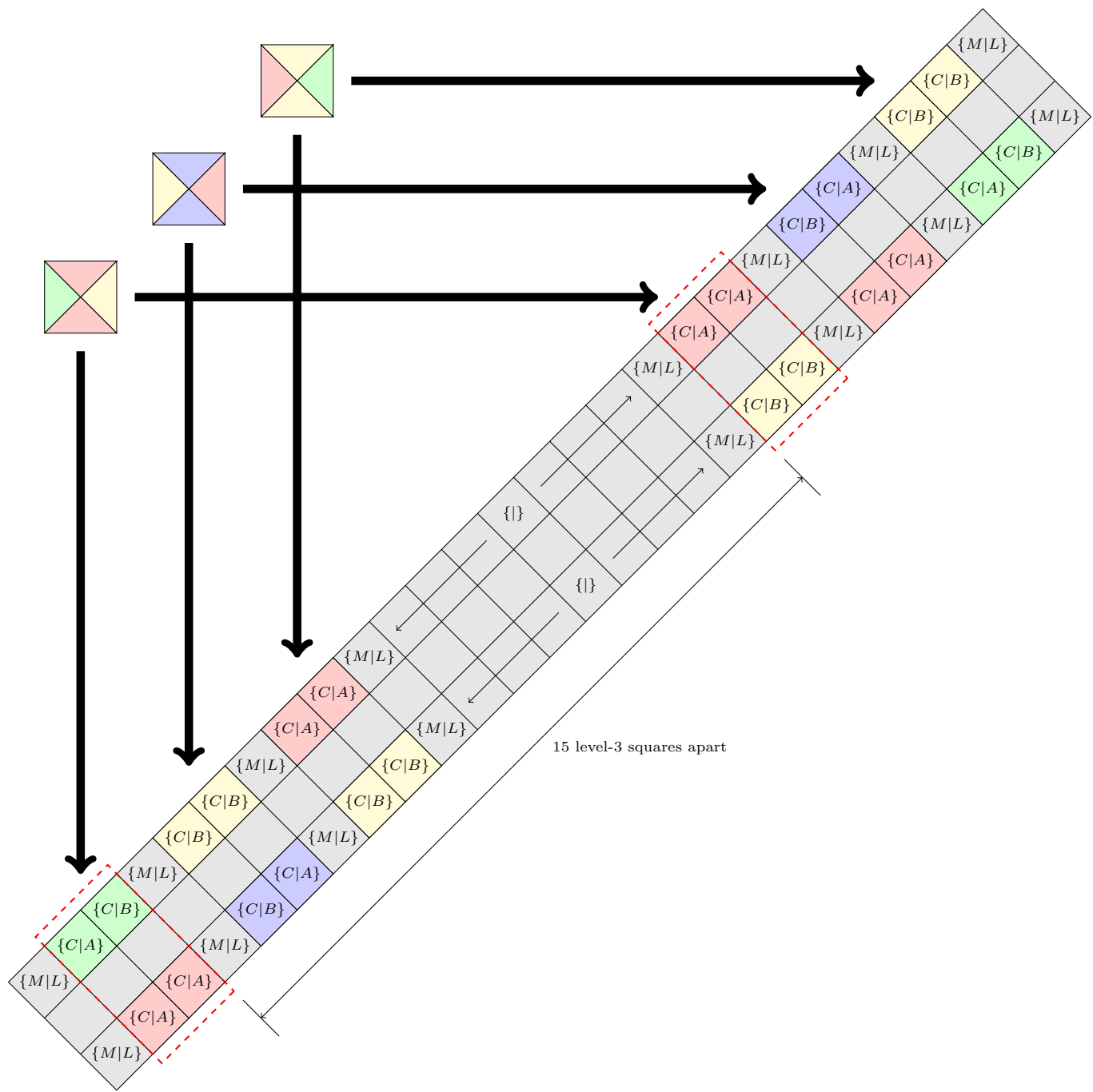
\begin{figure}[ht]
\begin{center}
\begin{tikzpicture}[scale=0.3]

\draw [ fill=gray!20] (0,0)--(54,54)--(48,60)--(-6,6)--(0,0);

\foreach \x in {0,1}
{
\draw (-2-2*\x,2*\x+2)--(-2*\x+52,2*\x+56);
}
\foreach \x in {1,...,26}
{
\draw (2*\x-6,2*\x+6)--(2*\x,2*\x);
}

\foreach \x in {1}
\foreach \y in {1}
{
\draw [ fill=red!20] (2*\x,2*\y)--(2*\x+2,2*\y+2)--(2*\x,2*\y+4)--(2*\x-2,2*\y+2)--(2*\x,2*\y);
\node at (2*\x,2*\y+2) {\tiny ${\{C|A\}}$}; 
}
\foreach \x in {2}
\foreach \y in {2}
{
\draw [ fill=red!20] (2*\x,2*\y)--(2*\x+2,2*\y+2)--(2*\x,2*\y+4)--(2*\x-2,2*\y+2)--(2*\x,2*\y);
\node at (2*\x,2*\y+2) {\tiny ${\{C|A\}}$}; 
}
\foreach \x in {21}
\foreach \y in {21}
{
\draw [ fill=red!20] (2*\x,2*\y)--(2*\x+2,2*\y+2)--(2*\x,2*\y+4)--(2*\x-2,2*\y+2)--(2*\x,2*\y);
\node at (2*\x,2*\y+2) {\tiny ${\{C|A\}}$}; 
}
\foreach \x in {22}
\foreach \y in {22}
{
\draw [ fill=red!20] (2*\x,2*\y)--(2*\x+2,2*\y+2)--(2*\x,2*\y+4)--(2*\x-2,2*\y+2)--(2*\x,2*\y);
\node at (2*\x,2*\y+2) {\tiny ${\{C|A\}}$}; 
}

\foreach \x in {4}
\foreach \y in {4}
{
\draw [ fill=blue!20] (2*\x,2*\y)--(2*\x+2,2*\y+2)--(2*\x,2*\y+4)--(2*\x-2,2*\y+2)--(2*\x,2*\y);

\node at (2*\x,2*\y+2) {\tiny ${\{C|B\}}$}; 
}
\foreach \x in {5}
\foreach \y in {5}
{
\draw [ fill=blue!20] (2*\x,2*\y)--(2*\x+2,2*\y+2)--(2*\x,2*\y+4)--(2*\x-2,2*\y+2)--(2*\x,2*\y);

\node at (2*\x,2*\y+2) {\tiny ${\{C|A\}}$}; 
}

\foreach \x in {7}
\foreach \y in {7}
{
\draw [ fill=yellow!20] (2*\x,2*\y)--(2*\x+2,2*\y+2)--(2*\x,2*\y+4)--(2*\x-2,2*\y+2)--(2*\x,2*\y);
\node at (2*\x,2*\y+2) {\tiny ${\{C|B\}}$}; 
}
\foreach \x in {8}
\foreach \y in {8}
{
\draw [ fill=yellow!20] (2*\x,2*\y)--(2*\x+2,2*\y+2)--(2*\x,2*\y+4)--(2*\x-2,2*\y+2)--(2*\x,2*\y);
\node at (2*\x,2*\y+2) {\tiny ${\{C|B\}}$}; 
}

\foreach \x in {19}
\foreach \y in {19}
{
\draw [ fill=yellow!20] (2*\x,2*\y)--(2*\x+2,2*\y+2)--(2*\x,2*\y+4)--(2*\x-2,2*\y+2)--(2*\x,2*\y);
\node at (2*\x,2*\y+2) {\tiny ${\{C|B\}}$}; 
}
\foreach \x in {18}
\foreach \y in {18}
{
\draw [ fill=yellow!20] (2*\x,2*\y)--(2*\x+2,2*\y+2)--(2*\x,2*\y+4)--(2*\x-2,2*\y+2)--(2*\x,2*\y);
\node at (2*\x,2*\y+2) {\tiny ${\{C|B\}}$}; 
}

\foreach \x in {24}
\foreach \y in {24}
{
\draw [ fill=green!20] (2*\x,2*\y)--(2*\x+2,2*\y+2)--(2*\x,2*\y+4)--(2*\x-2,2*\y+2)--(2*\x,2*\y);
\node at (2*\x,2*\y+2) {\tiny ${\{C|A\}}$}; 
}
\foreach \x in {25}
\foreach \y in {25}
{
\draw [ fill=green!20] (2*\x,2*\y)--(2*\x+2,2*\y+2)--(2*\x,2*\y+4)--(2*\x-2,2*\y+2)--(2*\x,2*\y);
\node at (2*\x,2*\y+2) {\tiny ${\{C|B\}}$}; 
}

\foreach \x in {5}
\foreach \y in {9}
{
\draw [ fill=red!20] (2*\x,2*\y)--(2*\x+2,2*\y+2)--(2*\x,2*\y+4)--(2*\x-2,2*\y+2)--(2*\x,2*\y);
\node at (2*\x,2*\y+2) {\tiny ${\{C|A\}}$}; 
}
\foreach \x in {6}
\foreach \y in {10}
{
\draw [ fill=red!20] (2*\x,2*\y)--(2*\x+2,2*\y+2)--(2*\x,2*\y+4)--(2*\x-2,2*\y+2)--(2*\x,2*\y);
\node at (2*\x,2*\y+2) {\tiny ${\{C|A\}}$}; 
}
\foreach \x in {17}
\foreach \y in {21}
{
\draw [ fill=red!20] (2*\x,2*\y)--(2*\x+2,2*\y+2)--(2*\x,2*\y+4)--(2*\x-2,2*\y+2)--(2*\x,2*\y);
\node at (2*\x,2*\y+2) {\tiny ${\{C|A\}}$}; 
}
\foreach \x in {16}
\foreach \y in {20}
{
\draw [ fill=red!20] (2*\x,2*\y)--(2*\x+2,2*\y+2)--(2*\x,2*\y+4)--(2*\x-2,2*\y+2)--(2*\x,2*\y);
\node at (2*\x,2*\y+2) {\tiny ${\{C|A\}}$}; 
}

\foreach \x in {0}
\foreach \y in {4}
{
\draw [ fill=green!20] (2*\x,2*\y)--(2*\x+2,2*\y+2)--(2*\x,2*\y+4)--(2*\x-2,2*\y+2)--(2*\x,2*\y);
\node at (2*\x,2*\y+2) {\tiny ${\{C|B\}}$}; 
}
\foreach \x in {-1}
\foreach \y in {3}
{
\draw [ fill=green!20] (2*\x,2*\y)--(2*\x+2,2*\y+2)--(2*\x,2*\y+4)--(2*\x-2,2*\y+2)--(2*\x,2*\y);
\node at (2*\x,2*\y+2) {\tiny ${\{C|A\}}$}; 
}

\foreach \x in {2}
\foreach \y in {6}
{
\draw [ fill=yellow!20] (2*\x,2*\y)--(2*\x+2,2*\y+2)--(2*\x,2*\y+4)--(2*\x-2,2*\y+2)--(2*\x,2*\y);
\node at (2*\x,2*\y+2) {\tiny ${\{C|B\}}$}; 
}
\foreach \x in {3}
\foreach \y in {7}
{
\draw [ fill=yellow!20] (2*\x,2*\y)--(2*\x+2,2*\y+2)--(2*\x,2*\y+4)--(2*\x-2,2*\y+2)--(2*\x,2*\y);
\node at (2*\x,2*\y+2) {\tiny ${\{C|B\}}$}; 
}

\foreach \x in {22}
\foreach \y in {26}
{
\draw [ fill=yellow!20] (2*\x,2*\y)--(2*\x+2,2*\y+2)--(2*\x,2*\y+4)--(2*\x-2,2*\y+2)--(2*\x,2*\y);
\node at (2*\x,2*\y+2) {\tiny ${\{C|B\}}$}; 
}
\foreach \x in {23}
\foreach \y in {27}
{
\draw [ fill=yellow!20] (2*\x,2*\y)--(2*\x+2,2*\y+2)--(2*\x,2*\y+4)--(2*\x-2,2*\y+2)--(2*\x,2*\y);
\node at (2*\x,2*\y+2) {\tiny ${\{C|B\}}$}; 
}

\foreach \x in {20}
\foreach \y in {24}
{
\draw [ fill=blue!20] (2*\x,2*\y)--(2*\x+2,2*\y+2)--(2*\x,2*\y+4)--(2*\x-2,2*\y+2)--(2*\x,2*\y);
\node at (2*\x,2*\y+2) {\tiny ${\{C|A\}}$}; 
}
\foreach \x in {19}
\foreach \y in {23}
{
\draw [ fill=blue!20] (2*\x,2*\y)--(2*\x+2,2*\y+2)--(2*\x,2*\y+4)--(2*\x-2,2*\y+2)--(2*\x,2*\y);
\node at (2*\x,2*\y+2) {\tiny ${\{C|B\}}$}; 
}

\foreach \x in {0,3,6,9,17,20,23,26}
\foreach \y in {0,4}
{
\node at (2*\x-\y,2*\x+2+\y) {\tiny ${\{M|L\}}$}; 
}

\foreach \x in {13}
\foreach \y in {0,4}
{
\draw [->] (2*\x-\y+1.5,2*\x+2+\y+1.5)--(2*\x-\y+6.5,2*\x+2+\y+6.5);
\draw [->] (2*\x-\y-1.5,2*\x+2+\y-1.5)--(2*\x-\y-6.5,2*\x+2+\y-6.5);
\node at (2*\x-\y,2*\x+2+\y) {\tiny ${\{|\}}$}; 
}

\draw [fill=green!20] (-4,42)--(-2,44)--(-4,46)--(-4,42);
\draw [fill=yellow!20] (0,42)--(-2,44)--(0,46)--(0,42);
\draw [fill=red!20] (-4,42)--(-2,44)--(0,42)--(-4,42);
\draw [fill=red!20] (-4,46)--(-2,44)--(0,46)--(-4,46);
\draw [->,line width=4pt] (1,44)--(30,44);
\draw [->,line width=4pt] (-2,41)--(-2,12);

\draw [dashed, red, thick] (6.5,5.5)--(2.5,1.5)--(-4.5,8.5)--(-0.5,12.5)--(6.5,5.5);
\draw [dashed, red, thick] (40.5,39.5)--(36.5,35.5)--(29.5,42.5)--(33.5,46.5)--(40.5,39.5);

\draw (7,5)--(9,3); \draw (37,35)--(39,33);
\draw [<->] (8,4)--(38,34);
\node at (29,19) {\tiny $15$ level-$3$ squares apart};

\foreach \x in {6}
\foreach \y in {6}
{
\draw [fill=yellow!20] (\x+-4,42+\y)--(\x+-2,44+\y)--(\x+-4,46+\y)--(\x+-4,42+\y);
\draw [fill=red!20] (\x+0,42+\y)--(\x+-2,44+\y)--(\x+0,46+\y)--(\x+0,42+\y);
\draw [fill=blue!20] (\x+-4,42+\y)--(\x+-2,44+\y)--(\x+0,42+\y)--(\x+-4,42+\y);
\draw [fill=blue!20] (\x+-4,46+\y)--(\x+-2,44+\y)--(\x+0,46+\y)--(\x+-4,46+\y);
\draw [->,line width=4pt] (\x+1,44+\y)--(\x+30,44+\y);
\draw [->,line width=4pt] (\x+-2,41+\y)--(\x+-2,12+\y);
}

\foreach \x in {12}
\foreach \y in {12}
{
\draw [fill=red!20] (\x+-4,42+\y)--(\x+-2,44+\y)--(\x+-4,46+\y)--(\x+-4,42+\y);
\draw [fill=green!20] (\x+0,42+\y)--(\x+-2,44+\y)--(\x+0,46+\y)--(\x+0,42+\y);
\draw [fill=yellow!20] (\x+-4,42+\y)--(\x+-2,44+\y)--(\x+0,42+\y)--(\x+-4,42+\y);
\draw [fill=yellow!20] (\x+-4,46+\y)--(\x+-2,44+\y)--(\x+0,46+\y)--(\x+-4,46+\y);
\draw [->,line width=4pt] (\x+1,44+\y)--(\x+30,44+\y);
\draw [->,line width=4pt] (\x+-2,41+\y)--(\x+-2,12+\y);
}

\end{tikzpicture}
\end{center}
\caption{The encoder} \label{fig_encoder}
\end{figure}

Figure \ref{fig_encoder} depicts the \textit{encoder} that simulates the set of Wang tiles in Figure \ref{fig_wang_set}. It is made up of $3\times 27$ level-$3$ squares. There are $3$ kinds of labeled level-$3$ squares in the encoder. The first kind is the level-$3$ squares with labels $\{C|A\}$ or $\{C|B\}$, and they are intended to encode the colored edges of the Wang tiles. The set of Wang tiles in Figure \ref{fig_wang_set} has $4$ different colors, so they can be encoded by binary strings with length $\log_2 4 =2$. In particular, they are encoded by two consecutive level-$3$ squares on the northwest or southeast boundary of the encoder. The colors red, green, blue and yellow are encoded by $\{C|A\}\{C|A\}$, $\{C|A\}\{C|B\}$, $\{C|B\}\{C|A\}$ and $\{C|B\}\{C|B\}$, respectively. The second kind of level-$3$ squares are the \textit{markers} $\{M|L\}$, which separate the simulated colored edges in the encoder. The third kind is labeled with $\{|\}$ for padding purpose.

Structurally, the encoder consists of three segments: the bottom left encoding segment of size $3\times 10$ (count by level-$3$ squares), the central padding segment of size $3\times 7$, and the top right encoding segment of size $3\times 10$. The four edges of each Wang tile are encoded in the encoder separately in two segments, where the bottom edge and left edge are encoded in the bottom left encoding segment, and the top edge and right edge are encoded in the top right encoding segment. In Figure \ref{fig_encoder}, the two portions that encode the first Wang tile are enclosed by the red dashed lines. The padding segment plays a role in the overall structure of the tiling. With the padding segment, the two portions corresponding to a single Wang tile are exactly $15$ level-$3$ squares apart.

In general, for a set of $n$ Wang tiles with $m$ different colors (let $t=\lceil \log_2 m \rceil$), the corresponding encoder polyomino consists of $3\times \big ( (3n-1)(t+1)+3 \big )$ level-$3$ squares. Each of the two encoding segments is of size $3\times \big ( n(t+1)+1 \big )$, and the central padding segment is of size $3\times \big ( (n-1)(t+1)+1 \big )$. The two portions that encode the same Wang tile are exactly $2(n-1)(t+1)+3$ level-$3$ squares apart.


\begin{figure}[ht]
\begin{center}
\begin{tikzpicture}[scale=0.3]

\draw [ fill=gray!20] (0,0)--(2,2)--(-4,8)--(-6,6)--(0,0);
\draw [ fill=gray!20] (8,0)--(10,2)--(4,8)--(2,6)--(8,0);

\foreach \x in {0,8}
\foreach \y in {2}
{
\draw (\x-2,\y)--(\x,\y+2);
}

\foreach \x in {-2,6}
\foreach \y in {4}
{
\draw (\x-2,\y)--(\x,\y+2);
}

\node at (0,2) {\tiny ${\{A|C\}}$}; 
\node at (-4,6) {\tiny ${\{A|C\}}$}; 

\node at (8,2) {\tiny ${\{B|C\}}$}; 
\node at (4,6) {\tiny ${\{B|C\}}$};

\end{tikzpicture}
\end{center}
\caption{The $A$-linker and $B$-linker} \label{fig_linkers}
\end{figure}
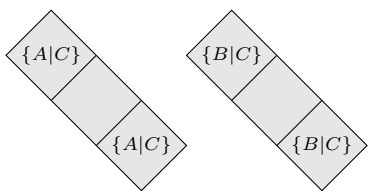


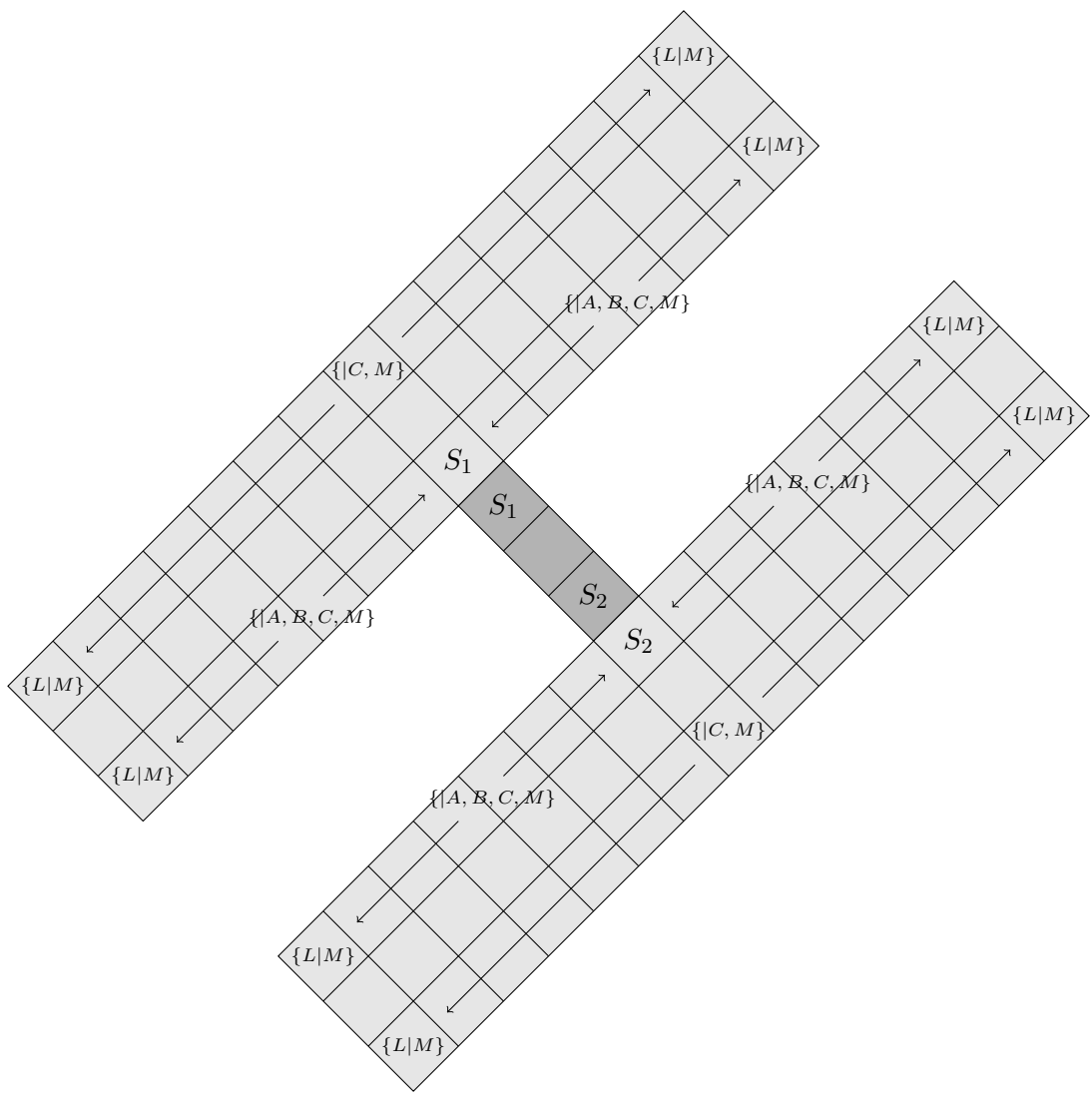
\begin{figure}[ht]
\begin{center}
\begin{tikzpicture}[scale=0.3]

\draw [ fill=gray!20] (0,0)--(30,30)--(24,36)--(-6,6)--(0,0);
\draw [ fill=gray!60] (14,14)--(20,8)--(22,10)--(16,16)--(14,14);
\draw [ fill=gray!20] (20,8)--(6,-6)--(12,-12)--(42,18)--(36,24)--(20,8);

\draw (16,12)--(18,14);\draw (18,10)--(20,12);

\foreach \x in {0,1,-5,-6}
{
\draw (-2-2*\x,2*\x+2)--(-2*\x+28,2*\x+32);
}
\foreach \x in {1,...,14}
{
\draw (2*\x-6,2*\x+6)--(2*\x,2*\x);
\draw (2*\x+6,2*\x-6)--(2*\x+12,2*\x-12);
}

\foreach \x in {0,2,-4,-6}
\foreach \y in {0,28}
{
\node at (-2*\x+\y,2*\x+2+\y) {\tiny ${\{L|M\}}$}; 
}

\foreach \x in {0,8}
{
\draw [->] (11.5+2*\x,21.5-2*\x)--(22.5+2*\x,32.5-2*\x);
\draw [->] (8.5+2*\x,18.5-2*\x)--(-2.5+2*\x,7.5-2*\x);
\node at (10+2*\x,20-2*\x) {\tiny ${\{|C,M\}}$}; 
}

\foreach \x in {0,4}
\foreach \y in {0,14}
{
\draw [->] (8+2*\x+\y,10-2*\x+\y)--(12.5+2*\x+\y,14.5-2*\x+\y);
\draw [->] (6+2*\x+\y,8-2*\x+\y)--(1.5+2*\x+\y,3.5-2*\x+\y);
\node at (7.5+2*\x+\y,9-2*\x+\y) {\tiny ${\{|A,B,C,M\}}$}; 
}

\foreach \x in {3,4}
\foreach \y in {7}
{
\node at (7+2*\x+\y,9-2*\x+\y) {$S_2$}; 
}
\foreach \x in {0,1}
\foreach \y in {7}
{
\node at (7+2*\x+\y,9-2*\x+\y) {$S_1$}; 
}

\end{tikzpicture}
\end{center}
\caption{The locator} \label{fig_locator}
\end{figure}

The two \textit{linkers}, $A$-linker and $B$-linker, are illustrated in Figure \ref{fig_linkers}, and they are used to connect adjacent color edges of simulated Wang tiles from two encoders. Like the tiny filler, the sizes of the linkers are fixed and do not depend on the size of the set of Wang tiles. Both linkers are made up of $3\times 1$ level-$3$ squares. The two level-$3$ squares on the two ends of the $A$-linker are labeled $\{A|C\}$, while the two ends of the $B$-linker are labeled $\{B|C\}$.

As illustrated in Figure \ref{fig_locator}, the \textit{locator} consists of three level-$3$ polyominoes. Two of them are made up of $3\times 15$ level-$3$ squares (in light gray in Figure \ref{fig_linkers}), and the remaining one is made up of $3\times 1$ level-$3$ squares (in dark gray). Their sizes correspond to the set of Wang tiles in Figure \ref{fig_wang_set}. In general, for a set of $n$ Wang tiles with $m$ colors (let $t=\lceil \log_2 m \rceil$), the two light gray polyominoes will consist of $3\times \big ( 2(n-1)(t+1)+3 \big )$ level-$3$ squares, and the dark gray polyomino remains the same size as $3\times 1$.

They are glued together by two kinds of dedicated level-$3$ squares labeled by $S_1=\{L,M|L,M\}$ and $S_2=\{A,L,M|A,L,M\}$ with special bumps and dents. They are special in the sense that the set $\{L,M\}$ is contained in both the left part and the right part of $S_1$ and $S_2$. In fact, no other level-$3$ squares $T$ in our set of $7$ polyominoes satisfies condition $\{L,M\}\subseteq T_R$. By Lemma \ref{lem_level3}, a level-$3$ square $S_1$ (resp. $S_2$) with dents and bumps on the southeast side can only be placed next to another level-$3$ square $S_1$ (resp. $S_2$) with dents and bumps on the northwest side. Therefore, the three partial locators always appear as a group in any tiling of the plane, and we will treat them as a combined locator in the rest of the paper. It is easy to check that each of the partial locators is orthogonally convex, but it is not orthogonally convex when they are glued together as one. To prove our main result, Theorem \ref{thm_main}, we have to split the locator into three pieces.

In addition to the special level-$3$ squares $S_1$ and $S_2$, there are three more kinds of level-$3$ squares in the locator. The \textit{selectors} $\{L|M\}$ will select a simulated Wang tile from the encoder in any tiling of the plane. The remaining two kinds of level-$3$ squares, $\{|C,M\}$ and $\{|A,B,C,M\}$, do not have too many special purposes and are there to make sure everything else works harmoniously.

As a whole, the locator can be viewed as a $9\times \big ( 2(n-1)(t+1)+3 \big )$ rectangle (of level-$3$ squares) with two $3\times \big ( (n-1)(t+1)+1 \big )$ rectangles being excavated from the northeast and southwest, respectively. The sizes of the two excavated rectangles are determined by the size of the set of Wang tiles such that the concave region left behind can contain at most $(n-1)$ simulated Wang tiles of an encoding segment (the bottom left segment or the top right segment) of the encoder. Therefore, at least one simulated Wang tile of each encoding segment must be exposed outside the locator. The size of the padding segment of the encoder is then determined by the size of the locator. With the padding segment, the distance between two encoding portions for a single Wang tile is set to be exactly the length of the longest side of the encoder, i.e., $2(n-1)(t+1)+3$.

We have completed introducing the set of $7$ polyominoes: one tiny filler and six level-$3$ polyominoes. It is straightforward to verify that all of them are orthogonally convex. We conclude this section with three lemmas on the properties of tiling the plane with this set of $7$ orthogonally convex polyominoes.

The following lemma can be viewed as a stronger version of Lemma \ref{lem_tiny_1} in Section \ref{sec_bb}.

\begin{Lemma}\label{lem_tiny_2}
    For any tiling of the entire plane with the set of $7$ orthogonally convex polyominoes introduced in this section, no two tiny fillers can be adjacent.
\end{Lemma}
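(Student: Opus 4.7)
The plan is to proceed by contradiction: suppose two tiny fillers $F_A$ and $F_B$ share at least one unit edge in a valid tiling. I will show that the rigidity of the filler's boundary forces a cell overlap, contradicting the tiling property.

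First, I record the key geometric facts about the tiny filler. Its twelve-segment boundary splits into a northwest staircase with profile $5$-$3$-$1$-$1$-$3$-$5$ (segment lengths, read as one traverses from the west corner toward the north corner), a southeast staircase obtained from the former by $180^\circ$ rotation, and two pairs of flat runs of length $5$ joining the two staircases. The filler has $180^\circ$ rotational symmetry about its center. Crucially, among the boundaries of the other six tiles, the only unit-segment runs of length $\geq 2$ come from the dent boundaries of level-$2$ squares of order $13$ (profile $5$-$3$-$1$-$1$-$3$-$5$) or the bump boundaries (profile $6$-$3$-$1$-$1$-$3$-$6$); every other portion of a level-$2$ square boundary is a $1$-unit staircase.

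Second, using the $180^\circ$ symmetry, I reduce to the case where $F_B$ lies in the northwest half-plane of $F_A$. The shared edge then lies within $F_A$'s northwest boundary (the west flat, the northwest staircase, or the north flat). Up to symmetry, I enumerate the finite list of translations $(dx,dy)$ of $F_B$ relative to $F_A$ that produce at least one shared unit edge and no cell overlap; such translations correspond exactly to a segment of $F_B$'s southeast boundary coinciding with a matching-length segment of $F_A$'s northwest boundary (in particular, a $5$-unit flat match, a full $3$-unit staircase match, or a $1$-unit step match).

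Third, I process each translation. The unshared portion of $F_A$'s northwest boundary must still be covered by adjacent tiles. By the classification in step one, any tile covering a run of length $\geq 2$ on the unshared portion must carry a matching dent (whose $5$-$3$-$1$-$1$-$3$-$5$ profile is rigid and uniquely pins down the dented level-$2$ square's position once one of its $5$-unit flat end-segments is aligned with one of $F_A$'s $5$-unit flat sides) or must be yet another tiny filler. In the dent case, the forced position of the level-$2$ square places its interior cells either at the same cells already occupied by $F_B$ (contradiction) or at cells that collide with the level-$2$ square forced on the other side of $F_A$ by its southeast boundary. In the additional-filler case, the newly introduced $F_C$ realizes a translation of $F_A$ that coincides with a previously considered case, reducing back to the dent-conflict.

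The main obstacle is organizing the case analysis so that every admissible $(dx,dy)$ is accounted for without redundancy; the $180^\circ$ symmetry halves the work, and the rigidity of the $5$-$3$-$1$-$1$-$3$-$5$ profile, anchored by its $5$-unit flat ends, means each case ultimately reduces to a local cell-incidence check between $F_B$ and the level-$2$ square forced by the remainder of $F_A$'s boundary.
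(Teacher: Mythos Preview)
Your plan follows the same contradiction-by-case-analysis skeleton as the paper, but it has two concrete gaps that prevent it from going through as stated.

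First, your enumeration in step two is not complete. You assert that the admissible translations ``correspond exactly to a segment of $F_B$'s southeast boundary coinciding with a matching-length segment of $F_A$'s northwest boundary,'' and list full $5$-, $3$-, and $1$-unit matches. But partial matches are also possible: for instance, $F_B$'s south $5$-flat can sit on $F_A$'s north $5$-flat with any horizontal offset that leaves at least one shared unit edge and no overlap, and similarly along the west/east $5$-flats. The paper's Figure~9 placement is exactly such a partial match (the fillers share only part of a long side), so your trichotomy already misses the paper's main case.

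Second, your step three does not justify why a dented level-$2$ square is \emph{forced} to cover the unshared portion of $F_A$'s northwest boundary, nor why its position is ``uniquely pinned down'' once only a fragment of the $5$-$3$-$1$-$1$-$3$-$5$ profile is exposed. Several tiles could meet $F_A$ along that boundary, and a single big tile need not account for all of it; your collision claim (``interior cells either at the same cells already occupied by $F_B$ \ldots'') is asserted rather than derived.

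The paper's proof avoids both difficulties by looking not at the northwest boundary of $F_A$ but at the three-cell concave notch on its \emph{southeast} side (the ``red dots'' in Figure~9). Because the six level-$3$ polyominoes have unit-step staircase boundaries everywhere except at dents and bumps, and neither a dent nor a bump can fit that notch once a second filler already abuts $F_A$, those three cells can only be covered by a third tiny filler---which then produces an immediately untileable pocket. This single local obstruction collapses the case analysis to two situations (touching a long side, or sitting in the notch), rather than the open-ended boundary-matching argument you outline. If you want to salvage your approach, the cleanest fix is to redirect attention to that southeast notch and argue cell-by-cell there.
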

\begin{proof}
   Suppose to the contrast that two tiny fillers are adjacent. We place the first tiny filler anywhere on the plane (see the purple one in Figure \ref{fig_tiny_fillers_2}). Because of the symmetry of the tiny filler, the second tiny filler essentially has two ways to be placed next to the first tiny filler. The first way is to touch the purple tiny filler on one of the longest sides of length $5$ (as the light gray tiny filler in Figure \ref{fig_tiny_fillers_2}), and the second way is to fill in the concave part of the purple tiny filler marked by red dots in Figure \ref{fig_tiny_fillers_2}.

In the first case, after placing the light gray tiny filler next to one of the longest side (see the left of Figure \ref{fig_tiny_fillers_2}), we cannot cover the locations marked with red dots with any of the $6$ big level-$3$ polyominoes without overlap. Therefore, we can only cover the red dots with yet another tiny filler (see the dark gray tiny filler on the right of Figure \ref{fig_tiny_fillers_2}). This causes a contradiction, as the partial tiling cannot extend to a complete tiling of the plane any more.

In the second case, if we cover the red dots with a second tiny filler, then it yields a contradiction immediately.
\end{proof}


\begin{figure}[H]
\begin{center}
\begin{tikzpicture}[scale=0.3]

\draw [fill=violet!20] (2,1)--(2,6)--(5,6)--(5,7)--(6,7)--(6,10)--(11,10)--(11,5)--(8,5)--(8,4)--(7,4)--(7,1)--(2,1);

\draw [->, line width=3pt] (20,7)--(26,7);

\foreach \x in {25}
\foreach \y in {0}
{
\draw [fill=violet!20] (\x+2,1+\y)--(\x+2,6+\y)--(\x+5,6+\y)--(\x+5,7+\y)--(\x+6,7+\y)--(\x+6,10+\y)--(\x+11,10+\y)--(\x+11,5+\y)--(\x+8,5+\y)--(\x+8,4+\y)--(\x+7,4+\y)--(\x+7,1+\y)--(\x+2,1+\y); 
}

\foreach \x in {9,34}
\foreach \y in {5}
{
\draw [fill=gray!20] (\x+2,1+\y)--(\x+2,6+\y)--(\x+5,6+\y)--(\x+5,7+\y)--(\x+6,7+\y)--(\x+6,10+\y)--(\x+11,10+\y)--(\x+11,5+\y)--(\x+8,5+\y)--(\x+8,4+\y)--(\x+7,4+\y)--(\x+7,1+\y)--(\x+2,1+\y); 
}

\foreach \x in {27}
\foreach \y in {-5}
{
\draw [fill=gray!60] (\x+2,1+\y)--(\x+2,6+\y)--(\x+5,6+\y)--(\x+5,7+\y)--(\x+6,7+\y)--(\x+6,10+\y)--(\x+11,10+\y)--(\x+11,5+\y)--(\x+8,5+\y)--(\x+8,4+\y)--(\x+7,4+\y)--(\x+7,1+\y)--(\x+2,1+\y); 
}

\filldraw[red] (8.5,4.5) circle (.2);
\filldraw[red] (9.5,4.5) circle (.2);
\filldraw[red] (10.5,4.5) circle (.2);

\end{tikzpicture}
\end{center}
\caption{Tiling with tiny fillers.}\label{fig_tiny_fillers_2}
\end{figure}
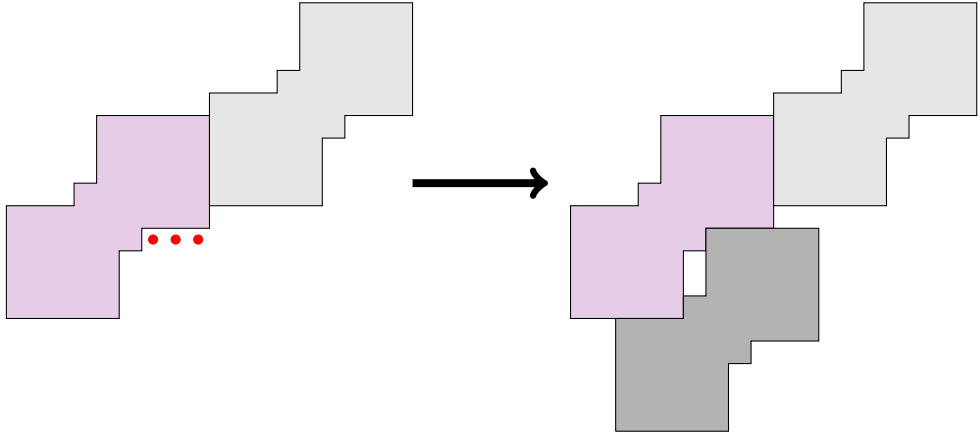

As a consequence of Lemma \ref{lem_tiny_2}, we can obtain the following lemma (Lemma \ref{lem_align_2}) on the tiling property of the level-$3$ polyominoes. Recall that each level-$3$ polyomino is made up of level-$3$ squares (of order $(13,22)$), which in turn are made up of level-$2$ squares of order $13$. Within each level-$3$ polyomino, the level-$2$ squares are aligned to a lattice form by the two vectors $\textbf{i}=(13,12)$ and $\textbf{j}=(-12,13)$. Lemma \ref{lem_align_2} claims that any tiling of the entire plane must also be aligned to this lattice.

\begin{Lemma}\label{lem_align_2}
    For any tiling of the entire plane with the set of $7$ orthogonally convex polyominoes introduced in this section, all level-$2$ squares in every level-$3$ polyominoes are aligned to a common lattice generated by the two vectors $\textbf{i}=(13,12)$ and $\textbf{j}=(-12,13)$. 
\end{Lemma}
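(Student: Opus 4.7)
The plan is to prove Lemma \ref{lem_align_2} in two stages: first establish a \emph{local} alignment statement for any two adjacent level-$3$ polyominoes, and then \emph{propagate} this alignment to the whole tiling via a connectedness argument. The underlying idea is that the bumps, dents, and tiny fillers of Section \ref{sec_bb} are all rigid enough that wherever two level-$3$ polyominoes touch at a level-$2$ square boundary, their internal $(\mathbf{i},\mathbf{j})$-lattices are forced to agree; since the tiny fillers cannot form separating walls, this local rigidity spreads everywhere.

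For the local step I would pick two level-$3$ polyominoes $P$ and $Q$ such that some level-$2$ square $s\subset P$ is adjacent (along a northwest or southeast side) to some level-$2$ square $s'\subset Q$. Bumps and dents only decorate the northwest and southeast sides, so the contact between $s$ and $s'$ falls into exactly one of three cases: (i) a bump of $s$ fits a dent of $s'$; (ii) a dent of $s$ receives a bump of $s'$; or (iii) both $s$ and $s'$ present a dent on the relevant side, in which case the gap between them is closed by a tiny filler (the bump–bump case is excluded because bumps would overlap). Inspecting the concrete shapes in Figures \ref{fig_db} and \ref{fig_tiny_filler}, each of these three configurations admits a unique relative position of $s'$ with respect to $s$, and that position is precisely a translation by $\pm\mathbf{i}$ or $\pm\mathbf{j}$ (depending on which side meets which). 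Since inside each level-$3$ polyomino all level-$2$ centers already lie on the lattice generated by $\mathbf{i}=(13,12)$ and $\mathbf{j}=(-12,13)$, the lattices of $P$ and $Q$ must coincide.

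For the propagation step, I would form the graph $G$ whose vertices are the level-$3$ polyominoes of the tiling and whose edges join any two polyominoes that share at least one level-$2$ square contact. The local step shows that all polyominoes in a single connected component of $G$ share a common $(\mathbf{i},\mathbf{j})$-lattice. To finish, I would argue $G$ is connected: otherwise two components would have to be separated by a region made entirely of tiny fillers, but Lemma \ref{lem_tiny_2} forbids two tiny fillers from being adjacent, and Lemma \ref{lem_tiny_1} forbids a single tiny filler from tiling any unbounded strip, so no such separating region exists. Hence there is one global lattice, as claimed.

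The main obstacle will be the geometric case analysis in the local step, in particular case (iii): one must verify explicitly that when two dents face each other the tiny filler admits exactly one placement, and that this placement pins down $s'$ to the predicted lattice neighbor of $s$. The symmetry of the tiny filler (used already in the proof of Lemma \ref{lem_tiny_2}) makes this a finite check, but it is where the bulk of the actual work resides; cases (i) and (ii) and the connectedness argument are then comparatively routine.
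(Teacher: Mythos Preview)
Your approach is essentially the same as the paper's: use Lemma \ref{lem_tiny_1} and Lemma \ref{lem_tiny_2} to conclude that every level-$3$ polyomino is surrounded by other level-$3$ polyominoes (with at most isolated tiny-filler gaps), and then check that the three possible local configurations (bump--dent, dent--bump, dent--dent plus tiny filler) each force the neighboring level-$2$ squares onto the same $(\mathbf{i},\mathbf{j})$-lattice. The paper's argument is just your local step plus a one-line version of your propagation step; your graph $G$ formalization is a harmless repackaging.

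One small omission: you only treat contacts along northwest/southeast sides, but two level-$3$ polyominoes can also meet along their northeast/southwest sides, where there are no bumps or dents at all. The paper handles this explicitly (the undecorated staircase boundaries must match perfectly, which again forces the $(\mathbf{i},\mathbf{j})$-alignment). You should add this fourth, easy case to your local step so that the edges of your graph $G$ genuinely cover every possible adjacency.
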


\begin{proof}
    By Lemma \ref{lem_tiny_1}, the tiny fillers alone cannot tile the plane, so any plane tiling with the set of $7$ polyomino must make use of the level-$3$ polyominoes. By Lemma \ref{lem_tiny_2}, no two tiny fillers can be adjacent to each other in any plane tilings. Therefore, for any level-$3$ polyomino $P$ in a plane tiling, it must be surrounded by several other level-$3$ polyominoes, with possibly several isolated gaps between $P$ and the surrounding level-$3$ polyominoes such that each isolated gap (if any) can be filled exactly by a single tiny filler. 

    If two level-$2$ squares are adjacent in the northeast-southwest direction, then their boundaries must match perfectly without gaps, because there are no dents or bumps on the boundaries of northeast and southwest. If two level-$2$ squares are adjacent in the northwest-southeast direction, then dents and bumps of two adjacent level-$2$ squares must be one of the following three combinations: dent to bump, bump to dent, or dent to dent. For the first two combinations, the boundaries match perfectly without gaps, and for the third combination (i.e., dent to dent), an isolated hole can be filled by a tiny filler. In all cases, the level-$2$ squares must be aligned, this completes the proof.
\end{proof}

In addition to the alignment of level-$2$ squares, any plane tiling with the set of $7$ tiles must also be aligned with respect to level-$3$ squares. This fact will be confirmed in the proof of the main result in the next section. The following lemma states a weaker version of this fact, i.e., the alignment of level-$3$ squares in one direction (northwest-southeast).

\begin{Lemma}\label{lem_align_3}
   For any tiling of the entire plane with the set of $7$ orthogonally convex polyominoes introduced in this section, all level-$3$ squares are aligned in the northwest-southeast direction.
\end{Lemma}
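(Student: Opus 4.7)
The plan is to show that any two level-$3$ squares adjacent across a shared northwest/southeast boundary must line up exactly, by ruling out every possible nonzero offset via the fixed bumps and dents guaranteed by Lemma \ref{lem_level3}. By Lemma \ref{lem_align_2}, all level-$2$ squares sit on a common $\{\mathbf{i}, \mathbf{j}\}$-lattice, so any displacement between the two rows of level-$3$ squares along the shared boundary is an integer $k \in \{0, 1, \ldots, 21\}$ measured in level-$2$ positions. I plan to eliminate each choice of $k \ne 0$ by examining how the forced bump/dent patterns of the two second parts interact.

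Recall that on any northwest side the second part is forced to be $011111111110$, so local positions $7, \ldots, 16$ are bumps and positions $6, 17$ are dents; on any southeast side the second part is forced to be $100000000001$, so positions $7, \ldots, 16$ are dents and positions $6, 17$ are bumps. By Lemma \ref{lem_level3}, a bump on one side cannot face a bump on the other side without causing an overlap. For $k \in \{1, \ldots, 10\}$, the northwest bump at local position $r = k + 6 \in \{7, \ldots, 16\}$ would then face the southeast bump at local position $6$, producing an overlap; symmetrically, for $k \in \{12, \ldots, 21\}$, the northwest bump at $r = k - 5$ would face the southeast bump at position $17$. Both ranges of $k$ are thereby ruled out.

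The only remaining nontrivial case is $k = 11$, where no two forced bumps collide head on. Here the ten forced bumps at positions $7, \ldots, 16$ of a single northwest level-$3$ square align with ten variable positions split across two adjacent southeast level-$3$ squares of the other row: northwest positions $7, \ldots, 11$ face the five third-part positions $18, \ldots, 22$ of one southeast level-$3$ square, while northwest positions $12, \ldots, 16$ face the five first-part positions $1, \ldots, 5$ of the next. Avoiding overlap at each of these ten locations would force every one of the five third-part positions of the first southeast square to be a dent; but by construction every level-$3$ square in our set has at most four dents in its third part. This contradiction eliminates $k = 11$, leaving only $k = 0$, which is the stated alignment.

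I expect the $k = 11$ case to be the main obstacle, since a direct bump-versus-bump comparison of the forced middle patterns does not produce any collision on its own. One must observe that the forced ten-bump block on one side straddles a level-$3$ boundary on the other side and then invoke the ``at most four dents in the third part'' constraint, rather than just Lemma \ref{lem_level3}, to reach a contradiction.
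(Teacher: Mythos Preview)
Your approach is correct and essentially the same as the paper's, just organized differently. The paper's proof is a single observation: by inspecting all six level-$3$ polyominoes, a run of ten consecutive bumps on a northwest boundary (or ten consecutive dents on a southeast boundary) can only occur as the second part of a single level-$3$ square --- never elsewhere within one side and never straddling two adjacent level-$3$ squares (the latter is exactly the content of the paper's footnote on the first/third-part constraints). Hence the ten northwest bumps must sit against the ten southeast dents, forcing alignment. Your offset-by-offset case analysis is precisely an explicit verification of this same observation: the cases $k\in\{1,\ldots,10\}\cup\{12,\ldots,21\}$ rule out any misaligned placement of the ten-bump block against a single southeast level-$3$ square via a direct bump--bump collision at position $6$ or $17$, and your $k=11$ case --- invoking the ``at most four dents in the third part'' constraint --- is exactly why the block cannot straddle two adjacent southeast level-$3$ squares. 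So the two arguments coincide; yours is the computational unfolding of the paper's one-line claim.

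One minor correction: your citations of Lemma~\ref{lem_level3} are misplaced. That lemma concerns the compatibility of \emph{aligned} level-$3$ squares via their label sets $S_L,S_R,T_L,T_R$; it neither guarantees the fixed second-part patterns $011111111110$ and $100000000001$ nor states the bump-versus-bump overlap fact. Both of those come directly from the construction in Section~\ref{sec_bb}, prior to Lemma~\ref{lem_level3}. The lemma you actually rely on throughout is Lemma~\ref{lem_align_2}, which reduces the possible offsets to integers in level-$2$ units.
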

\begin{proof}
This fact has in fact been mentioned in Section \ref{sec_bb} when introducing a notation to label the dents and bumps on the sides of a level-$3$ square. We are able to state it and prove it formally as a lemma only after we have completed the definition of the set of the $7$ polyominoes. By checking the dents and bumps on the northwest and southeast sides of the $6$ level-$3$ polyominoes, $10$ consecutive level-$2$ squares that all have dents (or all have bumps) appear only on the second part of the northwest or southeast sides of a level-$3$ square. By Lemma \ref{lem_align_2}, all level-$2$ squares must be aligned and matched. Therefore, the second part of one level-$3$ square must be aligned to the second part of another level-$3$ square in the northwest-southeast direction. As a consequence, the level-$3$ squares are aligned in this direction. 
\end{proof}

The alignment of level-$3$ squares in the other direction (northeast-southwest) is a consequence of the overall rigid structure of the tiling, as we will see soon in the next section.

Lemma \ref{lem_level3} and Lemma \ref{lem_align_3} will be applied repeatedly and implicitly in the next section to prove the main result. The two lemmas greatly reduce the possible way to place a tile in the process of tiling the plane.

\section{Rigid Tiling Pattern}\label{sec_pattern}

The main result (Theorem \ref{thm_main}) is proved in this section.

\begin{proof}[Proof of Theorem \ref{thm_main}]
We prove this theorem by reduction from Wang's domino problem. Given a set of Wang tiles, we have already constructed a set of $7$ polyominoes in Section \ref{sec_tileset}. To complete the proof, it suffices to show that our set of $7$ polyominoes can tile the plane if and only if the corresponding set of Wang tiles can tile the plane. To this end, we first show that if the set of $7$ polyominoes can tile the plane, then it must tile the plane in the rigid pattern illustrated in Figure \ref{fig_pattern}. In Figure \ref{fig_pattern}, the locators are shown in light and dark gray, the encoders are in orange, and the linkers are in purple.
\begin{itemize}
    \item \textbf{The locator must be used in any plane tiling.} As mentioned in the previous section, the locator is glued together from three partial locators by dedicated level-$3$ squares $S_1$ and $S_2$, so we refer to it as a single tile in the remainder of the proof. By Lemma \ref{lem_tiny_1}, at least one of the level-$3$ polyominoes must be used in plane tiling. If the encoder is used, then the locator must be used, because the marker $\{M|L\}$ on the encoder can only be matched by the selector $\{L|M\}$ in the locator. If the $A$-linker or $B$-linker is used, then either the encoder or the locator must be used, because the level-$3$ squares $\{A|C\}$ or $\{B|C\}$ have to match a level-$3$ square in the encoder or the locator. In all cases, the locator must be used.


\begin{figure}[ht]
\begin{center}
\begin{tikzpicture}[scale=0.2]


\foreach \x in {0}
\foreach \y in {0}
{
\draw [ fill=gray!20] (\x+0,\y+0)--(\x+15,\y+15)--(\x+12,\y+18)--(\x+5,\y+11)--(\x+2,\y+14)--(\x+9,\y+21)--(\x+6,\y+24)--(\x-9,\y+9)--(\x-6,\y+6)--(\x+1,\y+13)--(\x+4,\y+10)--(\x-3,\y+3)--(\x+0,\y+0);
\draw [fill=gray!60] (\x+5,\y+11)--(\x+2,\y+14)--(\x+1,\y+13)--(\x+4,\y+10)--(\x+5,\y+11);
}

\foreach \x in {12}
\foreach \y in {-12}
{
\draw [ fill=gray!20] (\x+0,\y+0)--(\x+15,\y+15)--(\x+12,\y+18)--(\x+5,\y+11)--(\x+2,\y+14)--(\x+9,\y+21)--(\x+6,\y+24)--(\x-9,\y+9)--(\x-6,\y+6)--(\x+1,\y+13)--(\x+4,\y+10)--(\x-3,\y+3)--(\x+0,\y+0);
\draw [fill=gray!60] (\x+5,\y+11)--(\x+2,\y+14)--(\x+1,\y+13)--(\x+4,\y+10)--(\x+5,\y+11);
}

\foreach \x in {34}
\foreach \y in {34}
{
\draw [ fill=gray!20] (\x+0,\y+0)--(\x+15,\y+15)--(\x+12,\y+18)--(\x+5,\y+11)--(\x+2,\y+14)--(\x+9,\y+21)--(\x+6,\y+24)--(\x-9,\y+9)--(\x-6,\y+6)--(\x+1,\y+13)--(\x+4,\y+10)--(\x-3,\y+3)--(\x+0,\y+0);
\draw [fill=gray!60] (\x+5,\y+11)--(\x+2,\y+14)--(\x+1,\y+13)--(\x+4,\y+10)--(\x+5,\y+11);
}

\foreach \x in {23}
\foreach \y in {11}
{
\draw [ fill=gray!20] (\x+0,\y+0)--(\x+15,\y+15)--(\x+12,\y+18)--(\x+5,\y+11)--(\x+2,\y+14)--(\x+9,\y+21)--(\x+6,\y+24)--(\x-9,\y+9)--(\x-6,\y+6)--(\x+1,\y+13)--(\x+4,\y+10)--(\x-3,\y+3)--(\x+0,\y+0);
\draw [fill=gray!60] (\x+5,\y+11)--(\x+2,\y+14)--(\x+1,\y+13)--(\x+4,\y+10)--(\x+5,\y+11);
}

\foreach \x in {46}
\foreach \y in {22}
{
\draw [ fill=gray!20] (\x+0,\y+0)--(\x+15,\y+15)--(\x+12,\y+18)--(\x+5,\y+11)--(\x+2,\y+14)--(\x+9,\y+21)--(\x+6,\y+24)--(\x-9,\y+9)--(\x-6,\y+6)--(\x+1,\y+13)--(\x+4,\y+10)--(\x-3,\y+3)--(\x+0,\y+0);
\draw [fill=gray!60] (\x+5,\y+11)--(\x+2,\y+14)--(\x+1,\y+13)--(\x+4,\y+10)--(\x+5,\y+11);
}

\foreach \x in {35}
\foreach \y in {-1}
{
\draw [ fill=gray!20] (\x+0,\y+0)--(\x+15,\y+15)--(\x+12,\y+18)--(\x+5,\y+11)--(\x+2,\y+14)--(\x+9,\y+21)--(\x+6,\y+24)--(\x-9,\y+9)--(\x-6,\y+6)--(\x+1,\y+13)--(\x+4,\y+10)--(\x-3,\y+3)--(\x+0,\y+0);
\draw [fill=gray!60] (\x+5,\y+11)--(\x+2,\y+14)--(\x+1,\y+13)--(\x+4,\y+10)--(\x+5,\y+11);
}

\foreach \x in {11}
\foreach \y in {23}
{
\draw [ fill=gray!20] (\x+0,\y+0)--(\x+15,\y+15)--(\x+12,\y+18)--(\x+5,\y+11)--(\x+2,\y+14)--(\x+9,\y+21)--(\x+6,\y+24)--(\x-9,\y+9)--(\x-6,\y+6)--(\x+1,\y+13)--(\x+4,\y+10)--(\x-3,\y+3)--(\x+0,\y+0);
\draw [fill=gray!60] (\x+5,\y+11)--(\x+2,\y+14)--(\x+1,\y+13)--(\x+4,\y+10)--(\x+5,\y+11);
}


\foreach \x in {5}
\foreach \y in {11}
{
\draw [ fill=orange!20] (\x+0,\y+0)--(\x+27,\y+27)--(\x+24,\y+30)--(\x-3,\y+3)--(\x,\y);
\draw [color=gray!50] (\x+1,\y+1)--(\x-2,\y+4);
\draw [color=gray!50] (\x+3,\y+3)--(\x,\y+6);
\draw [color=gray!50] (\x+4,\y+4)--(\x+1,\y+7);
\draw [color=gray!50] (\x+6,\y+6)--(\x+3,\y+9);
\draw [color=gray!50] (\x+7,\y+7)--(\x+4,\y+10);
\draw [color=gray!50] (\x+9,\y+9)--(\x+6,\y+12);
\draw [color=gray!50] (\x+10,\y+10)--(\x+7,\y+13);

\draw [color=gray!50] (\x+17,\y+17)--(\x+14,\y+20);
\draw [color=gray!50] (\x+18,\y+18)--(\x+15,\y+21);
\draw [color=gray!50] (\x+20,\y+20)--(\x+17,\y+23);
\draw [color=gray!50] (\x+21,\y+21)--(\x+18,\y+24);
\draw [color=gray!50] (\x+23,\y+23)--(\x+20,\y+26);
\draw [color=gray!50] (\x+24,\y+24)--(\x+21,\y+27);
\draw [color=gray!50] (\x+26,\y+26)--(\x+23,\y+29);
}

\foreach \x in {20}
\foreach \y in {2}
{
\draw [ fill=orange!20] (\x+0,\y+0)--(\x+27,\y+27)--(\x+24,\y+30)--(\x-3,\y+3)--(\x,\y);
\draw [color=gray!50] (\x+1,\y+1)--(\x-2,\y+4);
\draw [color=gray!50] (\x+3,\y+3)--(\x,\y+6);
\draw [color=gray!50] (\x+4,\y+4)--(\x+1,\y+7);
\draw [color=gray!50] (\x+6,\y+6)--(\x+3,\y+9);
\draw [color=gray!50] (\x+7,\y+7)--(\x+4,\y+10);
\draw [color=gray!50] (\x+9,\y+9)--(\x+6,\y+12);
\draw [color=gray!50] (\x+10,\y+10)--(\x+7,\y+13);

\draw [color=gray!50] (\x+17,\y+17)--(\x+14,\y+20);
\draw [color=gray!50] (\x+18,\y+18)--(\x+15,\y+21);
\draw [color=gray!50] (\x+20,\y+20)--(\x+17,\y+23);
\draw [color=gray!50] (\x+21,\y+21)--(\x+18,\y+24);
\draw [color=gray!50] (\x+23,\y+23)--(\x+20,\y+26);
\draw [color=gray!50] (\x+24,\y+24)--(\x+21,\y+27);
\draw [color=gray!50] (\x+26,\y+26)--(\x+23,\y+29);
}

\foreach \x in {34}
\foreach \y in {28}
{
\draw [ fill=orange!20] (\x+24,\y+30)--(\x-3,\y+3)--(\x+0,\y+0)--(\x+27,\y+27)--(\x+24,\y+30);
\draw [color=gray!50] (\x+1,\y+1)--(\x-2,\y+4);
\draw [color=gray!50] (\x+3,\y+3)--(\x,\y+6);
\draw [color=gray!50] (\x+4,\y+4)--(\x+1,\y+7);
\draw [color=gray!50] (\x+6,\y+6)--(\x+3,\y+9);
\draw [color=gray!50] (\x+7,\y+7)--(\x+4,\y+10);
\draw [color=gray!50] (\x+9,\y+9)--(\x+6,\y+12);
\draw [color=gray!50] (\x+10,\y+10)--(\x+7,\y+13);

\draw [color=gray!50] (\x+17,\y+17)--(\x+14,\y+20);
\draw [color=gray!50] (\x+18,\y+18)--(\x+15,\y+21);
\draw [color=gray!50] (\x+20,\y+20)--(\x+17,\y+23);
\draw [color=gray!50] (\x+21,\y+21)--(\x+18,\y+24);
\draw [color=gray!50] (\x+23,\y+23)--(\x+20,\y+26);
\draw [color=gray!50] (\x+24,\y+24)--(\x+21,\y+27);
\draw [color=gray!50] (\x+26,\y+26)--(\x+23,\y+29);
}

\foreach \x in {0}
\foreach \y in {-6}
{
\draw [ fill=orange!20] (\x+24,\y+30)--(\x-3,\y+3)--(\x+0,\y+0)--(\x+27,\y+27)--(\x+24,\y+30);
\draw [color=gray!50] (\x+1,\y+1)--(\x-2,\y+4);
\draw [color=gray!50] (\x+3,\y+3)--(\x,\y+6);
\draw [color=gray!50] (\x+4,\y+4)--(\x+1,\y+7);
\draw [color=gray!50] (\x+6,\y+6)--(\x+3,\y+9);
\draw [color=gray!50] (\x+7,\y+7)--(\x+4,\y+10);
\draw [color=gray!50] (\x+9,\y+9)--(\x+6,\y+12);
\draw [color=gray!50] (\x+10,\y+10)--(\x+7,\y+13);

\draw [color=gray!50] (\x+17,\y+17)--(\x+14,\y+20);
\draw [color=gray!50] (\x+18,\y+18)--(\x+15,\y+21);
\draw [color=gray!50] (\x+20,\y+20)--(\x+17,\y+23);
\draw [color=gray!50] (\x+21,\y+21)--(\x+18,\y+24);
\draw [color=gray!50] (\x+23,\y+23)--(\x+20,\y+26);
\draw [color=gray!50] (\x+24,\y+24)--(\x+21,\y+27);
\draw [color=gray!50] (\x+26,\y+26)--(\x+23,\y+29);
}


\foreach \x in {5,6,23,22}
{
\draw [fill=violet!20] (\x+5,\x+17)--(\x+2,\x+20)--(\x+1,\x+19)--(\x+4,\x+16)--(\x+5,\x+17);
}

\foreach \x in {28,...,33}
{
\draw [fill=violet!20] (\x+5,\x+11)--(\x+2,\x+14)--(\x+1,\x+13)--(\x+4,\x+10)--(\x+5,\x+11);
}

\foreach \x in {28,29,11,12}
{
\draw [fill=violet!20] (\x+5,\x+5)--(\x+2,\x+8)--(\x+1,\x+7)--(\x+4,\x+4)--(\x+5,\x+5);
}

\foreach \x in {24,...,29}
{
\draw [fill=violet!20] (\x+5,\x-1)--(\x+2,\x+2)--(\x+1,\x+1)--(\x+4,\x-2)--(\x+5,\x-1);
}

\foreach \x in {34,35,18,17}
{
\draw [fill=violet!20] (\x+5,\x-7)--(\x+2,\x-4)--(\x+1,\x-5)--(\x+4,\x-8)--(\x+5,\x-7);
}

\foreach \x in {13,14,15,43,44,45}
{
\draw [fill=violet!20] (\x+5,\x-13)--(\x+2,\x-10)--(\x+1,\x-11)--(\x+4,\x-14)--(\x+5,\x-13);
}

\foreach \x in {24,23,40,41}
{
\draw [fill=violet!20] (\x+5,\x-19)--(\x+2,\x-16)--(\x+1,\x-17)--(\x+4,\x-20)--(\x+5,\x-19);
}

\node at (24,26) {$1$};
\node at (1,16) {$2$}; \node at (13,3) {$3$};
\node at (47,37) {$4$}; \node at (35,49) {$5$};

\node at (13,10) {$a$}; \node at (47,44) {$b$};
\filldraw[black] (15,14) circle (.2);
\filldraw[black] (17,12) circle (.2);
\filldraw[black] (35,34) circle (.2);
\filldraw[black] (37,32) circle (.2);

\filldraw[red] (34,33) circle (.2);
\filldraw[red] (33,32) circle (.2);
\filldraw[red] (36,31) circle (.2);
\filldraw[red] (35,30) circle (.2);
\filldraw[red] (16,15) circle (.2);
\filldraw[red] (17,16) circle (.2);
\filldraw[red] (18,13) circle (.2);
\filldraw[red] (19,14) circle (.2);

\filldraw[red] (22,9) circle (.2);
\filldraw[red] (23,10) circle (.2);
\filldraw[red] (24,7) circle (.2);
\filldraw[red] (25,8) circle (.2);
\filldraw[red] (40,27) circle (.2);
\filldraw[red] (39,26) circle (.2);
\filldraw[red] (42,25) circle (.2);
\filldraw[red] (41,24) circle (.2);

\filldraw[red] (30,37) circle (.2);
\filldraw[red] (29,36) circle (.2);
\filldraw[red] (28,39) circle (.2);
\filldraw[red] (27,38) circle (.2);
\filldraw[red] (12,19) circle (.2);
\filldraw[red] (13,20) circle (.2);
\filldraw[red] (10,21) circle (.2);
\filldraw[red] (11,22) circle (.2);

\filldraw[red] (-1,-2) circle (.2);
\filldraw[red] (0,-1) circle (.2);
\filldraw[red] (1,-4) circle (.2);
\filldraw[red] (2,-3) circle (.2);

\filldraw[red] (51,50) circle (.2);
\filldraw[red] (50,49) circle (.2);
\filldraw[red] (53,48) circle (.2);
\filldraw[red] (52,47) circle (.2);

\draw (52,0)--(60,0)--(60,-8)--(52,-8)--(52,0);
\draw (56,0)--(56,-8);
\draw (52,-4)--(60,-4);

\end{tikzpicture}
\end{center}
\caption{The rigid tiling pattern} \label{fig_pattern}
\end{figure}
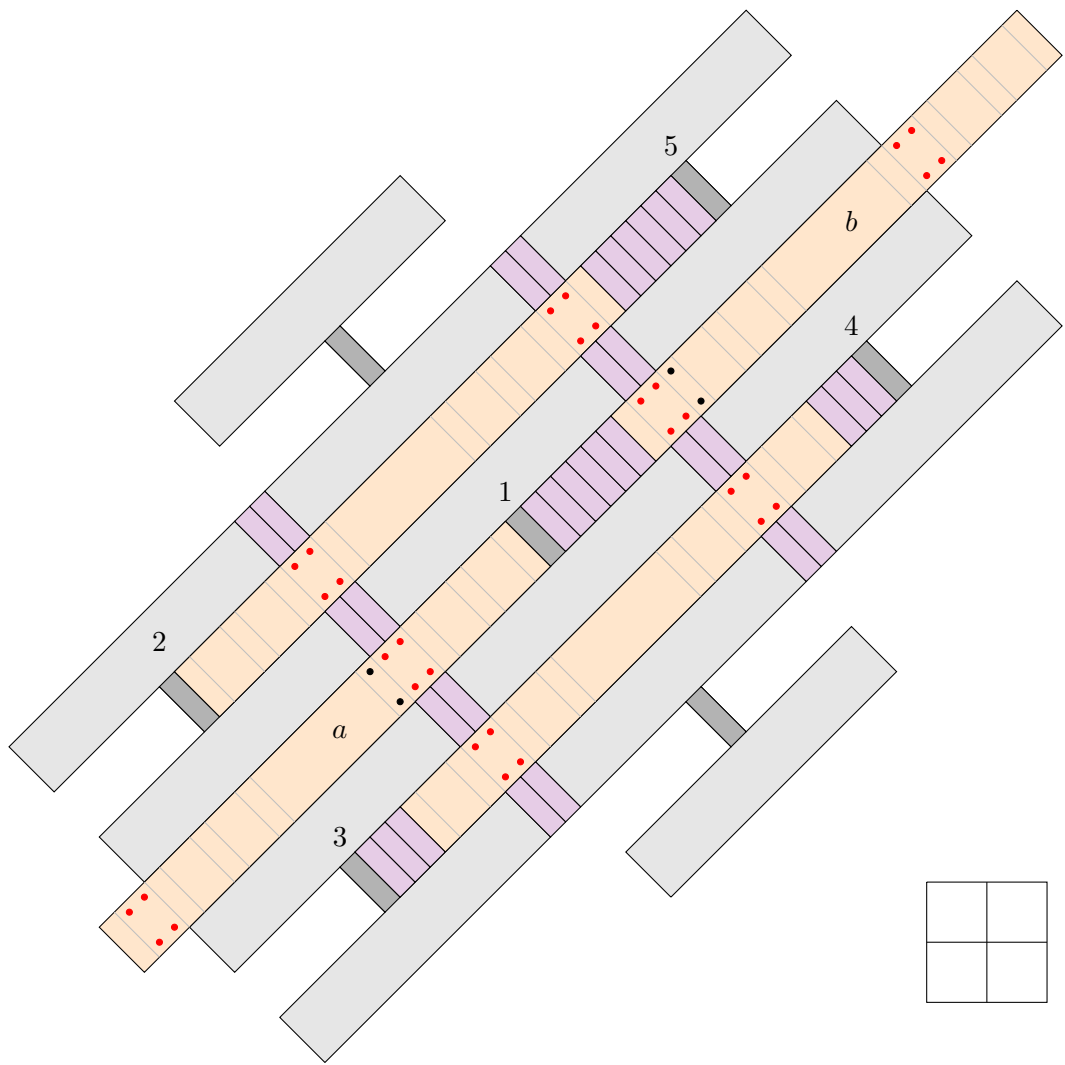

    \item \textbf{The locators form a rigid lattice structure.} Because the locators must be used, we place a locator anywhere on the plane (the locator marked $1$ in Figure \ref{fig_pattern}). Because the selectors $\{L|M\}$ in the locator can only be matched by the markers $\{M|L\}$ in the encoders, two encoders have to be placed partially inside the concave part of the locator $1$ (the encoders marked $a$ and $b$ in Figure \ref{fig_pattern}). There is flexibility of how the encoder is placed inside the locator, and we will discuss this in the next paragraph. No matter how the encoder is placed, on one end of the encoder, at most two simulated Wang tiles of one encoding segment are put inside the locator, and at least one simulated Wang tile is exposed outside. Right next to the first exposed simulated Wang tiles, there are markers (marked by the black dots in Figure \ref{fig_pattern}) exposed outside locator $1$. Again, these markers must be matched by the selectors in the locators, so the locators $2$, $3$, $4$ and $5$ must be placed as illustrated in Figure \ref{fig_pattern}. The above arguments could be applied to the newly added locators, and the pattern extends infinitely to the entire plane. Therefore, the locators form a rigid lattice structure.
    \item \textbf{Each encoder has the flexibility to expose one of the simulating Wang tiles.} As mentioned in the previous paragraph, though the locators form a rigid lattice structure, each encoder independently has the freedom to choose one of three simulated Wang tiles to be exposed outside the locators, which is marked by the red dots in Figure \ref{fig_pattern}. As we have mentioned in the previous section, the distance between two portions of a simulated Wang tile is identical to the longest side of the locator. Therefore, the two exposed portion of an encoder belong to the same simulated Wang tiles. Any two adjacent exposed simulated Wang tiles must be matched in color as we will see in the next paragraph. Thus, the exposed portions of the four encoders simulate the partial tiling of four Wang tiles as illustrated at the bottom right of Figure~\ref{fig_pattern}.
    \item \textbf{The linkers fill the gaps.} After the placement of the locators and encoders, there are still gaps left on the plane. All these gaps are mainly filled by the linkers\footnote{Here, we apply an innovative technique developed by Yoonhu Kim \cite{k25}. In Ollinger's original framework, two sets of significantly different tiles are used to fill the interior or exterior gaps left behind. Kim's technique can decrease the total number of tiles by two.}. The interior gaps inside the locators can always be filled by the linkers. This can be verified by applying Lemma \ref{lem_level3}. The exterior gaps can be filled if and only if the two adjacent simulated edges of the simulated Wang tiles encode the same color. Because by Lemma \ref{lem_level3}, the linkers can either connect a $\{C|A\}$ level-$3$ square with another $\{C|A\}$ level-$3$ square of the encoder, or connect $\{C|B\}$ with $\{C|B\}$. Finally, the tiny gaps between the level-$3$ squares are filled by the tiny filler.
\end{itemize}

We have shown that any plane tiling with the set of $7$ polyominoes has a rigid lattice structure and flexibility for each encoder. As a consequence, any tiling with the set of $7$ polyominoes can be converted to a tiling with the corresponding set of Wang tiles, and any tiling with the corresponding set of Wang tiles can also be converted to a tiling with the set of $7$ polyominoes. By the undecidability of Wang's domino problem (Theorem \ref{thm_berger}), translational tiling of the plane with a set of $7$ orthogonally convex is undecidable.
\end{proof}



\section{Conclusion}\label{sec_conc}

In this paper, we establish the undecidability of translation tiling with a fixed number of orthogonally convex tiles. The natural next step is to find a smaller positive $k\leq 6$ such that the translational tiling problem with a set of $k$ orthogonally convex tiles is undecidable.

A more interesting problem for further study is the decidability or undecidability of translational tiling of a set of convex tiles. As we have mentioned in Section \ref{sec_intro}, tiling with convex polyominoes is trivially decidable; thus we consider tiles of more general shapes.

\begin{Problem}
    Let $k$ be a fixed positive number. Given an arbitrary set of $k$ convex polygons, is there an algorithm to decide whether the plane can be tiled by translated copies of the $k$ convex polygons?
\end{Problem}

For $k=1$, the above problem is decidable because there are complete classifications for the tilability of a single tile under even more general conditions. The translational tiling of the plane with a single tile can always be periodic, even if the tile is nonconvex \cite{bn91,w15}, disconnected \cite{b20}, or a rational polygonal set \cite{dggm04}. As a result, the translational tiling problem with a single tile is decidable. For convex tiles, there is a complete classification of all convex polygons that can tile the plane, even if rotation is allowed (see \cite{zong20} for a concise survey). Is it possible to have an integer $k\geq 2$ such that translational tiling of the plane with a set of $k$ convex polygons is undecidable?


\end{document}